\newtheorem{theorem}{Theorem}[section]
\newtheorem{corollary}[theorem]{Corollary}
\newtheorem{lemma}[theorem]{Lemma}
\newtheorem{proposition}[theorem]{Proposition}
\newtheorem{conjecture}[theorem]{Conjecture}
\newtheorem{example}[theorem]{Example}
\newtheorem{definition}[theorem]{Definition}
\title{Punctual noncommutative Hilbert schemes}
\author{Markus Reineke}
\address{Markus Reineke,
Ruhr-Universit\"at Bochum, Faculty of Mathematics, Universit\"atsstra{\ss}e 150, 44780 Bochum, Germany}
\email{Markus.Reineke@ruhr-uni-bochum.de}
\begin{document}
\begin{abstract} Punctual noncommutative Hilbert schemes are projective varieties parametrizing finite codimensional left ideals in noncommutative formal power series rings. We determine their motives and intersection cohomology, by constructing affine pavings and small resolutions of singularities.\end{abstract}
\maketitle
\parindent0pt

\section{introduction}\label{intro}

Noncommutative Hilbert schemes, which are varieties parametrizing finite codimensional left ideals in free algebras, were first defined in \cite{Nori}, appeared as generic Brauer-Severi schemes in \cite{lebruyn,vandenbergh}, and were shown to admit a natural affine paving indexed by trees in \cite{R}. Subsequently, they (and their generalizations to arbitrary quivers \cite{ER,functional}) played a role in motivic Donaldson-Thomas theory, see for example \cite{franzen,joycesong,reinekedocumenta}.\\[1ex]
In the same way as punctual Hilbert schemes appear naturally in the study of Hilbert schemes of points \cite{iarrobino}, it is natural to study a punctual analogue of noncommutative Hilbert schemes  parametrizing finite codimensional ideals in noncommutative formal power series rings. These varieties  were studied in \cite{lebruyn} as particular fibres of generic Brauer-Severi schemes, whose equidimensionality is proved there.\\[1ex]
In the present work, after summarizing results of \cite{R} in Section \ref{nchilb}, we define punctual noncommutative Hilbert schemes in Section \ref{def}, give an invariant-theoretic interpretation, construct an embedding into a Grassmannian, and provide several small examples.\\[1ex]
Using a Harder-Narasimhan type stratification, we describe the generating series of motives of these varieties in Section \ref{motive1} as the solution to an algebraic functional equation, closely related to similar results in \cite{R} (see also \cite{ACMR}). Using this description of the motives at a key point, we show in Section \ref{paving} that the affine pavings of noncommutative Hilbert schemes constructed in \cite{R} in fact restrict to affine pavings of their punctual analogues.\\[1ex]The punctual noncommutative Hilbert schemes typically being singular, it is desirable to construct resolutions of singularities. This is accomplished in Section \ref{resolution} by naturally generalizing the Springer resolutions of nullcones. Again, we can compute the motives of the resulting smooth varieties, finally resulting in a very simple formula in Section \ref{motive2}. Very surprisingly, in Section \ref{small} our resolution turns out to be small, as a dimension estimate for a Steinberg-type variety shows. We can thus conclude with a closed formula for the Poincar\'e polynomial in intersection homology of the punctual noncommutative Hilbert schemes.\\[1ex]
The main results can be summarized as follows:
\begin{theorem} Let ${}^0{\rm Hilb}^{(m)}(\mathbb{C}^d)$ be the Hilbert scheme parametrizing codimension $d$ left ideals in $\mathbb{C}\langle\langle x_1,\ldots,x_m\rangle\rangle$. It is an irreducible projective variety of dimension $$\dim{}^0{\rm Hilb}^{(m)}(\mathbb{C}^d)=(m-1)d(d-1)/2$$ admitting an affine paving and a small resolution of singularities. The generating function of motives
$${}^0F^{(m)}(t)=\sum_{d\geq 0}\mathbb{L}^{-(m-1)d(d-1)/2}\cdot[{}^0{\rm Hilb}^{(m)}(\mathbb{C}^d)]t^d\in 1+tK_0({\rm Var}_\mathbb{C})[\mathbb{L}^{-1}][[t]]$$
is uniquely determined by
$${}^0F^{(m)}(t)=1+t\cdot\prod_{k=1}^m{}^0F^{(m)}(\mathbb{L}^{1-k}t).$$
Its Poincar\'e polynomial in rational intersection homology is given by
$$\sum_i\dim{\rm IH}^i({}^0{\rm Hilb}^{(m)}(V),\mathbb{Q})q^{i/2}=\prod_{i=0}^{d-1}\frac{q^{(m-1)i+1}-1}{q-1}.$$
\end{theorem}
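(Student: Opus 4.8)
The plan is to assemble the statement from the constructions carried out section by section. From Section~\ref{def} I would take the model of ${}^0{\rm Hilb}^{(m)}(\mathbb{C}^d)$ as the space of $GL_d(\mathbb{C})$-orbits of tuples $(v,\varphi_1,\dots,\varphi_m)$ with $v\in\mathbb{C}^d$ a cyclic vector for the operators $\varphi_i$, these generating a nilpotent algebra so that the associated left ideal has codimension $d$; equivalently the stable locus of a nullcone modulo $GL_d$, or a closed subvariety of a suitable Grassmannian, which already yields projectivity. Note that, because the generators act nilpotently, the $\varphi_i$ are simultaneously strictly triangularizable, a fact used both in the stratification and in the resolution below.

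For the motive I would run the Harder--Narasimhan type stratification of Section~\ref{motive1}: stratify the affine space of all framed nilpotent representations by HN type relative to the stability induced by the framing vector, the open stratum being exactly the punctual Hilbert scheme, and each other stratum fibering, through iterated affine and Grassmannian bundles, over products of smaller punctual Hilbert schemes. Running the construction successively over the $m$ generators, the $k$-th contributing the $\mathbb{L}$-shift $\mathbb{L}^{1-k}t$, repackages the resulting motivic identities into the functional equation ${}^0F^{(m)}(t)=1+t\cdot\prod_{k=1}^m{}^0F^{(m)}(\mathbb{L}^{1-k}t)$; comparing, by induction on $d$, the top $\mathbb{L}$-powers of the coefficients on both sides forces $\dim{}^0{\rm Hilb}^{(m)}(\mathbb{C}^d)=(m-1)d(d-1)/2$. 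Then in Section~\ref{paving} I would show that the tree-indexed affine paving of the ordinary noncommutative Hilbert scheme from \cite{R} restricts to a paving of the punctual locus by affine spaces; the essential non-formal input is precisely the motive just computed, which pins the classes of the cells down to powers of $\mathbb{L}$ and thereby forces the intersections with the punctual subvariety to be affine cells of the predicted dimensions.

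Next I would build the resolution of Section~\ref{resolution} by imitating the Springer resolution of a nullcone: let $\widetilde{X}$ parametrize a point of ${}^0{\rm Hilb}^{(m)}(\mathbb{C}^d)$ together with a complete flag $0=F_0\subset F_1\subset\cdots\subset F_d=\mathbb{C}^d$ satisfying $\varphi_i(F_j)\subseteq F_{j-1}$ for all $i,j$. Realizing $\widetilde{X}$ as a free quotient by a Borel subgroup of an open subvariety of an affine space, it is smooth and irreducible, and the forgetful map $\pi\colon\widetilde{X}\to{}^0{\rm Hilb}^{(m)}(\mathbb{C}^d)$ is proper, surjective (by the triangularizability above) and birational (over a dense open locus the submodule flag is unique), so in particular ${}^0{\rm Hilb}^{(m)}(\mathbb{C}^d)$ is irreducible; the motive of $\widetilde{X}$ is computed in Section~\ref{motive2} as a simple closed product, determining $H^\ast(\widetilde{X},\mathbb{Q})$ and its Poincar\'e polynomial. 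The step I expect to be the main obstacle is the smallness estimate of Section~\ref{small}: forming the Steinberg-type fibre product $Z=\widetilde{X}\times_{{}^0{\rm Hilb}^{(m)}(\mathbb{C}^d)}\widetilde{X}$, one must prove $\dim Z\leq\dim{}^0{\rm Hilb}^{(m)}(\mathbb{C}^d)$, equivalently that the locus over which the fibre of $\pi$ has dimension $\geq\ell$ has codimension strictly greater than $2\ell$; this is a delicate dimension count, stratum by stratum, in which the numerology $(m-1)d(d-1)/2$ enters essentially, and it is the technical heart of the proof.

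Finally, smallness of $\pi$ together with smoothness of $\widetilde{X}$ gives, by the decomposition theorem, $R\pi_\ast\mathbb{Q}_{\widetilde{X}}[\dim\widetilde{X}]\cong IC_{{}^0{\rm Hilb}^{(m)}(V)}$, hence ${\rm IH}^i({}^0{\rm Hilb}^{(m)}(V),\mathbb{Q})\cong H^i(\widetilde{X},\mathbb{Q})$ for all $i$. Substituting the Poincar\'e polynomial of $\widetilde{X}$ from Section~\ref{motive2} and rearranging the product then yields
$$\sum_i\dim{\rm IH}^i({}^0{\rm Hilb}^{(m)}(V),\mathbb{Q})q^{i/2}=\prod_{i=0}^{d-1}\frac{q^{(m-1)i+1}-1}{q-1},$$
which completes the proof.
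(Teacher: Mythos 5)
Your proposal is correct and follows essentially the same route as the paper: Grassmannian embedding for projectivity, the Harder--Narasimhan-type stratification for the functional equation, the motive computation as the non-formal input forcing the tree cells to pave the punctual locus, the Springer-type flag resolution for irreducibility and the dimension, and the Steinberg fibre-product dimension count for smallness plus the decomposition theorem for ${\rm IH}^*$. The only ingredient you gloss over is that the recursion from the stratification involves the motives of the nullcones $\mathcal{N}^{(m)}(W)$ as factors, so closing it requires the generating function for nullcone motives from \cite{GR}; otherwise the outline matches the paper section by section.
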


{\bf Acknowledgments:} The author would like to thank Ben Davison, Hans Franzen and Lydia Gösmann for inspiring discussions leading to the present research, and especially Pieter Belmans for key help in identifying a particular punctual Hilbert scheme in Example \ref{example}. The author is grateful to the MFO Oberwolfach and to the organizers of the CARE conference at ENS Lyon, where part of this research was carried out, for excellent working conditions.

\section{Recollections on noncommutative Hilbert schemes}\label{nchilb}

For the following material, we refer to \cite{R}. Fix $m\geq 1$. In the following, $V$ will always denote a complex vector space of dimension $d\geq 0$. The group ${\rm GL}(V)$ acts on ${\rm End}(V)^m$ by simultaneous conjugation. We denote by
$$X^{(m)}(V)={\rm End}(V)^m//{\rm GL}(V)$$
the invariant-theoretic quotient, that is, the spectrum of the ring of invariants
$$R^{(m)}(V)=\mathbb{C}[{\rm End}(V)^m]^{\rm GL(V)},$$
which is generated by functions
$$t_\omega(\varphi_1,\ldots,\varphi_m)={\rm tr}(\varphi_{i_s}\circ\ldots\circ\varphi_{i_1})$$
for words $\omega=(i_1\ldots,i_s)$ of length $s\geq 0$ in the alphabet $\{1,\ldots,m\}$.
The points of $X^{(m)}(V)$ naturally correspond to isomorphism classes of semisimple representations of the free algebra $A^{(m)}=\mathbb{C}\langle x_1,\ldots,x_m\rangle$ on $V$. The variety $X^{(m)}(V)$ is irreducible and affine, of dimension $$\dim X^{(m)}(V)=(m-1)d^2+1$$
if $m\geq 2$, and isomorphic to $\mathbb{C}^d$ in case $m=1$. The dilation action of $\mathbb{C}^*$ on ${\rm End}(V)^m$ being compatible with the ${\rm GL}(V)$-action, it induces an action on $X^{(m)}(V)$, which turns the latter into a cone with vertex $0$, the point corresponding to the zero orbit. \\[1ex]
We consider the ${\rm GL}(V)$-representation ${\rm End}(V)^m\times V$ and always denote its points by
$$(\varphi_*,v)=(\varphi_1,\ldots,\varphi_m,v)$$
for linear operators $\varphi_k$ and a vector $v$. We consider the open subset $({\rm End}(V)^m\times V)_{\rm st}$ of stable points, defined by the condition that 
$$\mathbb{C}\langle\varphi_1,\ldots,\varphi_m\rangle\cdot v=V,$$
that is, $v$ is a cyclic vector for the representation of $A^{(m)}$ on $V$ defined by the $\varphi_k$. The natural ${\rm GL}(V)$-action on this subset admits a geometric quotient
$${\rm Hilb}^{(m)}(V)=({\rm End}(V)^m\times V)_{\rm st}/{\rm GL}(V),$$
called the noncommutative Hilbert scheme. It is a smooth irreducible quasiprojective variety of dimension $(m-1)d^2+d$.\\[1ex]
Its points parametrize $d$-codimensional left ideals in the free algebra $A^{(m)}$. Namely, the annihilator
$${\rm Ann}(v)=\{P(x_1,\ldots,x_m)\in A^{(m)}\,:\, P(\varphi_*)v=0\}$$
(for $P(\varphi_*)=P(\varphi_1,\ldots,\varphi_m)$)
is such an ideal and, conversely, such an ideal $I\subset A^{(m)}$ gives rise to the (well-defined up to change of basis) $d$-dimensional space $A^{(m)}/I$ with the $m$ linear operators of multiplication by the $x_k$ and the cyclic vector $1+I$.\\[1ex]
The variety ${\rm Hilb}^{(m)}(V)$ can be realized invariant-theoretically as the ${\rm Proj}$ of the graded ring $\widehat{R}^{(m)}(V)$ of semi-invariant functions on ${\rm End}(V)^m\times V$ with respect to some multiple of the determinant character; this ring is generated over $R^{(m)}(V)$ by determinant functions $$D_{P_1,\ldots,P_d}(\varphi_*,v)=\det[P_1(\varphi_*)v|\ldots  P_d(\varphi_*)v]$$ for $d$-tuples $(P_1,\ldots,P_d)$ of polynomials in $A^{(m)}$.\\[1ex]
The natural map from the ${\rm Proj}$ of a graded ring to the ${\rm Spec}$ of its degree zero part yields a projective map
$$\pi:{\rm Hilb}^{(m)}(V)\rightarrow X^{(m)}(V)$$
induced on points by forgetting the cyclic vector (this can be viewed as a noncommutative analogue of the Hilbert-Chow morphism from a Hilbert scheme of points in a variety to the corresponding symmetric product). Viewing a point in ${\rm Hilb}^{(m)}(V)$ as an ideal $I$ as above, $\pi(I)$ corresponds to the semisimplification of the representation $A^{(m)}/I$.\\[1ex]
Let $\Omega$ be the set of finite words in the alphabet $\{1,\ldots,m\}$ (which we visualize as a free $m$-ary tree with the empty word as its root); we totally order $\Omega$ by the lexicographic order on words induced by the total order $1<2<\ldots<m$. A subset $T\subset\Omega$ is called a tree if it is closed under taking left subwords.\\[1ex]
For a $d$-element tree $T$, we choose a basis $(e_\omega)$ of $V$ indexed by the words $\omega\in T$.  We define a subset $\widetilde{S}_T\subset{\rm End}(V)^m\times V$ as the set of tuples $(\varphi_*,v)$ such that
\begin{enumerate}
\item $v=e_\emptyset$,
\item $\varphi_k(e_\omega)=e_{\omega k}$ if $\omega,\omega k\in T$,
\item $\varphi_k(e_\omega)\in\langle e_{\omega'}\, :\, \omega'\in T,\, \omega'<_{\rm lex}\omega k\}$ if $\omega\in T$, $\omega k\not\in T$.
\end{enumerate}
We denote by $S_T\subset{\rm Hilb}^{(m)}(V)$ the image of $\widetilde{S}_T$ under the quotient map. Then the $S_T$, for $T$ ranging over the $d$-element trees $T$, form an affine paving of ${\rm Hilb}^{(m)}(V)$. 

We thus see that the motive of ${\rm Hilb}^{(m)}(V)$, that is, its class in the Grothendieck ring $K_0({\rm Var}_\mathbb{C})$ of complex varieties, is a polynomial in the Lefschetz motive $\mathbb{L}$. We can characterize the generating series of all these motives by a simple functional equation:

\begin{theorem}\cite[Theorem 5.5]{R}\label{thmold} The series
$$F^{(m)}(t)=\sum_{d\geq 0}\mathbb{L}^{-(m-1)d(d+1)/2-d}[{\rm Hilb}^{(m)}(\mathbb{C}^d)]t^d\in 1+t K_0({\rm Var}_\mathbb{C})[\mathbb{L}^{-1}][[t]]$$
is uniquely determined by the functional equation
$$F^{(m)}(t)=1+t\cdot\prod_{k=1}^mF^{(m)}(\mathbb{L}^{k-1}t).$$
\end{theorem}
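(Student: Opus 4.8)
The plan is to read off the generating series directly from the tree-indexed affine paving of ${\rm Hilb}^{(m)}(\mathbb{C}^d)$ recalled above, exploiting the recursive structure of trees under decomposition at the root. Since the cells $S_T$ form an affine paving, $[{\rm Hilb}^{(m)}(\mathbb{C}^d)]=\sum_{|T|=d}\mathbb{L}^{\dim S_T}$ with $T$ ranging over the $d$-element trees; hence, setting $w(T):=\mathbb{L}^{\dim S_T-(m-1)d(d+1)/2-d}$ for $d=|T|$, we get $F^{(m)}(t)=\sum_T w(T)\,t^{|T|}$, the empty tree contributing the constant term $1$. The theorem will follow as soon as we prove that the $w(T)$ behave multiplicatively under the root decomposition.

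First I would compute $\dim S_T$ from conditions (1)--(3): each $\widetilde S_T$ is cut out by linear equations, and a pair $(\omega,k)$ with $\omega\in T$, $\omega k\notin T$ contributes exactly $\#\{\omega'\in T:\omega'<_{\rm lex}\omega k\}$ free matrix entries, so
$$\dim S_T=\sum_{\omega\in T}\ \sum_{k:\,\omega k\notin T}\#\{\omega'\in T:\omega'<_{\rm lex}\omega k\}.$$
Next, fix the root decomposition $T=\{\emptyset\}\sqcup\bigsqcup_{k=1}^m k\cdot T_k$, where $T_k=\{\omega:k\omega\in T\}$ is again a tree; put $d_k:=|T_k|$, so $\sum_k d_k=d-1$. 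Since the order on $\Omega$ is lexicographic with $1<\dots<m$, the elements of $T$ strictly below a "missing slot" $k\omega''k'$ (with $\omega''\in T_k$) are exactly $\emptyset$, all of $\bigsqcup_{l<k}l\cdot T_l$, and the words $k\rho$ with $\rho\in T_k$, $\rho<_{\rm lex}\omega''k'$; moreover $T_k$ has precisely $(m-1)d_k+1$ missing slots (a tree with $d_k-1$ edges among $md_k$ possible ones), and the root's missing slots are the $k$ with $d_k=0$. Substituting this and recognizing $\sum_{\omega''\in T_k}\sum_{k':\,\omega''k'\notin T_k}\#\{\rho\in T_k:\rho<_{\rm lex}\omega''k'\}=\dim S_{T_k}$, the contributions reorganize into the recursion
$$\dim S_T=\sum_{k=1}^m\dim S_{T_k}+\sum_{k=1}^m\Big(1+\sum_{l<k}d_l\Big)\big((m-1)d_k+1\big).$$

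Then a short computation, using only $d=1+\sum_k d_k$, shows that the quadratic and linear corrections built into $w(T)$ exactly cancel those produced by the combinatorial term, leaving
$$w(T)=\mathbb{L}^{\sum_{k=1}^m(m-k)d_k}\prod_{k=1}^m w(T_k)$$
(indeed the exponent difference $[\dim S_T-(m-1)d(d+1)/2-d]-\sum_k[\dim S_{T_k}-(m-1)d_k(d_k+1)/2-d_k]$ collapses to $\sum_k(m-k)d_k$). Summing $w(T)\,t^{|T|}$ over all nonempty $T$ via the root decomposition, with $T_1,\dots,T_m$ running independently over all trees, gives
$$F^{(m)}(t)=1+t\sum_{T_1,\dots,T_m}\prod_{k=1}^m\mathbb{L}^{(m-k)|T_k|}w(T_k)\,t^{|T_k|}=1+t\prod_{k=1}^m F^{(m)}\!\big(\mathbb{L}^{m-k}t\big),$$
and re-indexing the commutative product by $k\mapsto m+1-k$ yields $F^{(m)}(t)=1+t\prod_{k=1}^m F^{(m)}(\mathbb{L}^{k-1}t)$, the asserted functional equation. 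Uniqueness is immediate: in this equation the coefficient of $t^d$ on the right involves only the coefficients of $t^0,\dots,t^{d-1}$ on the left, so together with $F^{(m)}(0)=1$ it pins down $F^{(m)}$ uniquely in $1+tK_0({\rm Var}_\mathbb{C})[\mathbb{L}^{-1}][[t]]$.

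The main obstacle is the dimension recursion: correctly tracking the interaction between the lexicographic order and the root decomposition --- that below a missing slot $k\omega''k'$ one sees $\emptyset$, all of the subtrees $T_l$ with $l<k$, and only the lex-smaller part of $T_k$ --- and then reassembling the resulting triple sum as $\sum_k\dim S_{T_k}$ plus the explicit combinatorial term. Once that recursion is in hand, the normalization bookkeeping, the passage to a product of series, the re-indexing, and uniqueness are all routine manipulations of power series.
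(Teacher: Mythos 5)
Your proof is correct: I checked both the dimension recursion $\dim S_T=\sum_k\dim S_{T_k}+\sum_{k=1}^m\bigl(1+\sum_{l<k}d_l\bigr)\bigl((m-1)d_k+1\bigr)$ coming from the grafting decomposition and the exponent identity collapsing to $\sum_k(m-k)d_k$, and both hold, so the functional equation and the uniqueness argument go through. The paper only cites this statement from \cite{R}; your tree-grafting argument is essentially the same as the one used there, and is the exact counterpart of the computation the present paper performs in Section \ref{paving} for the punctual cells $S_T'$.
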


For all properties of motives we will use in the following, we refer to \cite{Br}.

\section{Definition of punctual noncommutative Hilbert schemes}\label{def}

Recall the projective map $$\pi:{\rm Hilb}^{(m)}(V)\rightarrow X^{(m)}(V).$$
Our central object of interest is the most special fibre of this map:

\begin{definition} Define
$$ ^0{\rm Hilb}^{(m)}(V)=\pi^{-1}(0).$$
\end{definition}

${}^0{\rm Hilb}^{(m)}(V)$ is thus a projective variety. By the representation-theoretic description of $\pi$ above, its points can be viewed as $d$-codimensional left ideals in the noncommutative formal power series ring $\widehat{A}^{(m)}=\mathbb{C}\langle\langle x_1,\ldots,x_m\rangle\rangle$, justifying the name punctual noncommutative Hilbert scheme.\\[1ex]
We can interpret $^0{\rm Hilb}^{(m)}(V)$ as a geometric quotient as follows. Let $\mathcal{N}^{(m)}(V)\subset{\rm End}(V)^m$ be the cone of simultaneously nilpotent linear operators, which is a closed irreducible subvariety; it is the zero fibre of the quotient map
$${\rm End}(V)^m\rightarrow X^{(m)}(V)$$
(we refer to \cite{GR} for all required properties of $\mathcal{N}^{(m)}(V)$). We define
$$(\mathcal{N}^{(m)}(V)\times V)_{\rm st}=(\mathcal{N}^{(m)}(V)\times V)\cap({\rm End}(V)^m\times V)_{\rm st}.$$
Then $$^0{\rm Hilb}^{(m)}(V)=(\mathcal{N}^{(m)}(V)\times V)_{\rm st}/{\rm GL}(V).$$ 
We can explicitly coordinatize $^0{\rm Hilb}^{(m)}(V)$ by embedding it into a Grassmannian. Namely, if $\varphi_*\in\mathcal{N}^{(m)}(V)$, then there exist a complete flag $F_*$ in $V$ such that $\varphi_k(F_i)\subset F_{i-1}$ for all $i=1,\ldots,d$ and all $k=1,\ldots,m$. In  particular, every $d$-fold product of the $\varphi_k$ equals zero. The corresponding codimension $d$ left ideal $I\subset \widehat{A}^{(m)}$ thus contains the $d$-th power of the augmentation ideal 
 $\widehat{A}^{(m)}_+=(x_1,\ldots,x_m)$ of $\widehat{A}^{(m)}$, providing us with a well-defined codimension $d$ left ideal 
$$I/(\widehat{A}^{(m)}_+)^d\subset \widehat{A}^{(m)}/({\widehat{A}^{(m)}_+})^d,$$
and thus a point in the Grassmannian $${\rm Gr}^d(\widehat{A}^{(m)}/(\widehat{A}^{(m)}_+)^d)$$ of $d$-dimensional quotients. By the description of ${}^0{\rm Hilb}^{(m)}(V)$ as a space of left ideals, this map is a closed immersion. In explicit coordinates, we can use the determinal semi-invariants $D_{P_1,\ldots,P_d}$ for classes $P_i\in A^{(m)}/(A^{(m)}_+)^d)$ as Pluecker coordinates of the above Grassmannian to embed $^0{\rm Hilb}^{(m)}(V)$ into a projective space.


\begin{example}\label{example} Some small punctual noncommutative Hilbert schemes can be described explicitly.
\begin{itemize}
\item The punctual Hilbert schemes $^0{\rm Hilb}^{(1)}(V)$ all reduce to a single point, since a single nilpotent operator $\varphi$ admitting a cyclic vector is regular nilpotent. Equivalently, this point corresponds to the unique codimension $d$ ideal $(x^d)\in\mathbb{C}[[x]]$.
\item Trivially, ${}^0{\rm Hilb}^{(m)}(\mathbb{C})$ also reduces to a single point, corresponding to the augmentation ideal.
\item We have $$^0{\rm Hilb}^{(m)}(\mathbb{C}^2)\simeq\mathbb{P}^{m-1}.$$
Namely, any $(\varphi_1,\ldots,\varphi_m,v)$ can be represented by
$$\left[\begin{array}{ll}0&0\\ a_1&0\end{array}\right],\ldots,\left[\begin{array}{ll}0&0\\ a_1&0\end{array}\right],\left[{1\atop 0}\right],$$
unique up to rescaling the non-zero tuple $(a_1,\ldots,a_m)$. The corresponding left ideal is
$$(a_lx_k-a_kx_l,x_kx_l\, :\, k,l=1,\ldots,m).$$
\item Finally, we observe that $^0{\rm Hilb}^{(2)}(\mathbb{C}^3)$ is isomorphic to the cone over a  rational quartic scroll. Namely, we can embed this variety into a projective space with the coordinates $D_{P_1,P_2,P_3}$. Using a flag compatible with the operators $\varphi_1,\varphi_2$ as above, we easily see that $^0{\rm Hilb}^{(2)}(\mathbb{C}^3)$ already embeds into $\mathbb{P}^8$ using the functions
$$f_0=D_{1,x_1,x_2}\mbox{ and }f_{i,j,k}=D_{1,x_i,x_jx_k}\mbox{ for }i,j,k=1,2.$$
On the dense subset of $^0{\rm Hilb}^{(2)}(\mathbb{C}^3)$ of tuples
$$(\left[\begin{array}{lll}0&0&0\\ 1&0&0\\ 0&1&0\end{array}\right],\left[\begin{array}{lll}0&0&0\\ a&0&0\\ b&c&0\end{array}\right],\left[\begin{array}{l}1\\ 0\\ 0\end{array}\right]),$$
this embedding yields
$$(b:1:a:c:ac:a:a^2:ac:a^2)\in\mathbb{P}^8,$$
and we see that the image of this embedding is defined by the equalities
$$f_{1,1,2}=f_{2,1,1},\; f_{1,2,2}=f_{2,2,1}$$
together with the vanishing of all rank two minors of the matrix
$$\left[\begin{array}{llll} f_{1,1,1}&f_{1,1,2}&f_{1,2,1}&f_{1,2,2}\\ f_{2,1,1}&f_{2,1,2}&f_{2,2,1}&f_{2,2,2}\end{array}\right].$$
By \cite[Section 1.4]{Ha}, this defines the projective cone over the embedding of $\mathbb{P}^1\times\mathbb{P}^1$ into $\mathbb{P}^5$ via $\mathcal{O}_{\mathbb{P}^1\times\mathbb{P}^1}(1,2)$.
\end{itemize}
\end{example}

\section{Calculation of motives 1}\label{motive1}
In this section, we determine the motive of the $^0{\rm Hilb}^{(m)}(V)$ using a stratification of Harder-Narasimhan type \cite{RHNS}.\\[1ex]
We stratify $\mathcal{N}^{(m)}(V)\times V$ by the dimension of the subspace generated from the vector by the linear operators:
$$S_e(V)=\{(\varphi_*,v)\, :\, \dim\mathbb{C}\langle\varphi_1,\ldots,\varphi_k\rangle v=e\}.$$
The set $S_e(V)$ is the image of 
$$\widehat{S}_e(V)=\{(\varphi_*,v,U)\, :\, \mathbb{C}\langle\varphi_1,\ldots,\varphi_k\rangle v=U\}\subset \mathcal{N}^{(m)}(V)\times V\times{\rm Gr}_e(V)$$
under the projective map forgetting the subspace, which is bijective on points by definition. On the other hand, we have the projection $$p:\widehat{S}_e(V)\rightarrow{\rm Gr}_e(V),$$ which is equivariant for the natural ${\rm GL}(V)$-action, and thus turns $\widehat{S}_e(V)$ into a homogeneous bundle over ${\rm Gr}_e(V)$. To determine the fibre of this bundle, we fix a subspace $U\subset V$ of dimension $e$, choose a complement $W$, and represent vectors and linear operators with respect to the decomposition of $V=U\oplus W$. Then $p^{-1}(U)$ consists of all tuples

$$(\left[\begin{array}{ll}\varphi'_*&\zeta_*\\ 0&\varphi''_*\end{array}\right],\left[{v'\atop 0}\right])$$
for $(\varphi'_*,v)\in (\mathcal{N}^{(m)}(U)\times U)_{\rm st}$ and $(\varphi''_*)\in \mathcal{N}^{(m)}(W)$. This proves that
$$p^{-1}(U)\simeq S_e(U)\times \mathcal{N}^{(m)}(W)\times {\rm Hom}_\mathbb{C}(W,U)^m.$$
Denoting by $P(U)\subset{\rm GL}(V)$ the maximal parabolic of automorphisms fixing $U$, we thus conclude
$$\widehat{S}_e(V)\simeq{\rm GL}(V)\times^{P(U)}(S_e(U)\times \mathcal{N}^{(m)}(W)\times {\rm Hom}_\mathbb{C}(W,U)^m).$$
In the localized Grothendieck ring of complex varieties
$$R=K_0({\rm Var}_\mathbb{C})[\mathbb{L}^{-1},(1-\mathbb{L}^i)^{-1},\:\, i\geq 1],$$  we thus find an identity
$$[S_e(V)]=[\widehat{S}_e(V)]=\frac{[{\rm GL}(V)]}{[P(U)]}\cdot[S_e(U)]\cdot[\mathcal{N}^{(m)}(W)]\cdot[{\rm Hom}(W,U)^m].$$
This can be made explicit as
$$[S_e(V)]=\frac{[{\rm GL}_d(\mathbb{C})]}{[{\rm GL}_e(\mathbb{C})]\cdot[{\rm GL}_{d-e}(\mathbb{C})]}\cdot\mathbb{L}^{(m-1)e(d-e)}\cdot[\mathcal{N}^{(m)}(\mathbb{C}^{d-e})]\cdot[S_e(\mathbb{C}^e)].$$
Since the $S_e(V)$ stratify $\mathcal{N}^{(m)}(V)\times V$, we thus have

$$\mathbb{L}^{d}\cdot[\mathcal{N}^{(m)}(\mathbb{C}^d)]=\sum_{e=0}^d\frac{[{\rm GL}_d(\mathbb{C})]}{[{\rm GL}_e(\mathbb{C})]\cdot[{\rm GL}_{d-e}(\mathbb{C})]}\cdot\mathbb{L}^{(m-1)e(d-e)}\cdot[\mathcal{N}^{(m)}(\mathbb{C}^{d-e})]\cdot[S_e(\mathbb{C}^e)].$$

We have $$(\mathcal{N}^{(m)}(V)\times V)_{\rm st}=S_d(V),$$ and thus
$$\frac{\mathbb{L}^d\cdot [\mathcal{N}^{(m)}(\mathbb{C}^d)]}{[{\rm GL}_d(\mathbb{C})]}=\sum_{e=0}^d\mathbb{L}^{(m-1)e(d-e)}\cdot\frac{[\mathcal{N}^{(m)}(\mathbb{C}^{d-e})]}{[{\rm GL}_{d-e}(\mathbb{C})]}\cdot[^0{\rm Hilb}^{(m)}(\mathbb{C}^e)].$$

Now we form generating series, working in the (commutative) ring $R^{\rm tw}[[t]]$ with twisted multiplication 
$$t^e* t^f=\mathbb{L}^{(m-1)ef}t^{e+f}.$$
We find
$$\sum_{d\geq 0}\frac{[\mathcal{N}^{(m)}(\mathbb{C}^d)]}{[{\rm GL}_d(\mathbb{C})]}(\mathbb{L}t)^d=\sum_{d\geq 0}\frac{[\mathcal{N}^{(m)}(\mathbb{C}^d)]}{[{\rm GL}_d(\mathbb{C})]}t^d*\sum_{d\geq 0}[^0{\rm Hilb}^{(m)}(\mathbb{C}^d)]t^d.$$

Several formulas for the generating series of motives of nullcones are known \cite{GR}; we use \cite[Corollary 3.2]{GR}, which states that
$$\sum_{d\geq }\frac{[\mathcal{N}^{(m)}(\mathbb{C}^d)]}{[{\rm GL}_d(\mathbb{C})]}t^d*\sum_{d\geq 0}\frac{t^d}{(1-\mathbb{L})\cdot\ldots\cdot(1-\mathbb{L}^d)}=1.$$
Thus we find
$$\sum_{d\geq 0}\frac{t^d}{(1-\mathbb{L})\cdot\ldots\cdot(1-\mathbb{L}^d)}=\sum_{d\geq 0}\frac{(\mathbb{L}t)^d}{(1-\mathbb{L})\cdot\ldots\cdot(1-\mathbb{L}^d)}*\sum_{d\geq 0}[^0{\rm Hilb}^{(m)}(\mathbb{C}^d)]t^d.$$
Finally, we apply the $R$-linear map $$T(t^d)=\mathbb{L}^{-(m-1)d(d-1)/2}t^d,$$ which transforms the twisted multiplication into the usual one, resulting in 
$$\sum_{d\geq 0}\frac{\mathbb{L}^{-(m-1)d(d-1)/2}t^d}{(1-\mathbb{L})\cdot\ldots\cdot(1-\mathbb{L}^d)}=$$
$$=\sum_{d\geq 0}\frac{\mathbb{L}^{-(m-1)d(d-1)/2}(\mathbb{L}t)^d}{(1-\mathbb{L})\cdot\ldots\cdot(1-\mathbb{L}^d)}\cdot\sum_{d\geq 0}\mathbb{L}^{-(m-1)d(d-1)/2}\cdot[{}^0{\rm Hilb}^{(m)}(\mathbb{C}^d)]t^d$$
in $R[[t]]$. This proves the main result of this section:

\begin{theorem} Setting
$$H^{(m)}(q,t)=\sum_{d\geq 0}\frac{q^{-(m-1)d(d-1)/2}t^d}{(1-q)\cdot\ldots\cdot(1-q^d)}\in\mathbb{Q}(q)[[t]],$$
$$^0F^{(m)}(t)=\sum_{d\geq 0}\mathbb{L}^{-(m-1)d(d-1)/2}\cdot[^0{\rm Hilb}^{(m)}(\mathbb{C}^d)]t^d\in R[[t]],$$
we have
$$^0F^{(m)}(t)=\frac{H^{(m)}(\mathbb{L},t)}{H^{(m)}(\mathbb{L},\mathbb{L}t)}.$$
\end{theorem}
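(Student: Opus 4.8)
The plan is to assemble the chain of motivic identities derived above into the asserted closed form; the proof is essentially bookkeeping, so I will indicate how the pieces fit and where care is needed rather than recompute everything.

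First I would record the class of each stratum in $R$. Since the locally closed subsets $S_e(V)$, $0\le e\le d$, partition $\mathcal{N}^{(m)}(V)\times V$, we have $\mathbb{L}^d[\mathcal{N}^{(m)}(\mathbb{C}^d)]=\sum_{e=0}^d[S_e(V)]$. The forgetful morphism $\widehat{S}_e(V)\to S_e(V)$ is bijective, and its set-theoretic inverse is algebraic, namely $(\varphi_*,v)\mapsto(\varphi_*,v,\mathbb{C}\langle\varphi_*\rangle v)$, so it is an isomorphism onto its image stratum and $[S_e(V)]=[\widehat{S}_e(V)]$. The projection $\widehat{S}_e(V)\to{\rm Gr}_e(V)$ is a ${\rm GL}(V)$-homogeneous bundle; choosing $U$ with complement $W$, its fibre is $S_e(U)\times\mathcal{N}^{(m)}(W)\times{\rm Hom}_\mathbb{C}(W,U)^m$, and since ${\rm GL}(V)\to{\rm GL}(V)/P(U)$ is Zariski-locally trivial the bundle is too, which gives the displayed product formula for $[S_e(V)]$. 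Because $S_d(V)=(\mathcal{N}^{(m)}(V)\times V)_{\rm st}$ is a principal ${\rm GL}(V)$-bundle over ${}^0{\rm Hilb}^{(m)}(V)$ (Zariski-locally trivial, as ${\rm GL}$ is special), we have $[S_d(\mathbb{C}^d)]=[{\rm GL}_d(\mathbb{C})]\cdot[{}^0{\rm Hilb}^{(m)}(\mathbb{C}^d)]$; substituting and dividing by $[{\rm GL}_d(\mathbb{C})]$ yields the recursion displayed just before the statement.

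Next I would reinterpret that recursion as an identity of generating series in the twisted ring $(R^{\rm tw}[[t]],*)$. Writing $\mathcal{N}(t)=\sum_{d\ge0}\bigl([\mathcal{N}^{(m)}(\mathbb{C}^d)]/[{\rm GL}_d(\mathbb{C})]\bigr)t^d$ and $\mathfrak{F}(t)=\sum_{d\ge0}[{}^0{\rm Hilb}^{(m)}(\mathbb{C}^d)]t^d$, and observing that the substitution $\psi\colon t\mapsto\mathbb{L}t$ is a ring endomorphism of $(R^{\rm tw}[[t]],*)$, the recursion reads $\psi(\mathcal{N}(t))=\mathcal{N}(t)*\mathfrak{F}(t)$. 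By \cite[Corollary 3.2]{GR}, $\mathcal{N}(t)$ is the $*$-inverse of $E(t)=\sum_{d\ge0}t^d/\bigl((1-\mathbb{L})\cdots(1-\mathbb{L}^d)\bigr)$, whose constant term is $1$. Substituting $\mathcal{N}(t)=E(t)^{*-1}$ and using that $*$ is commutative, we obtain $\mathfrak{F}(t)=E(t)*\psi(E(t))^{*-1}$ in $R^{\rm tw}[[t]]$; here $\psi(E(t))$ has constant term $1$, hence is $*$-invertible.

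Finally I would apply the $R$-linear map $T$ with $T(t^d)=\mathbb{L}^{-(m-1)d(d-1)/2}t^d$, which — as the computation preceding the statement shows — is a ring isomorphism from $(R^{\rm tw}[[t]],*)$ onto $(R[[t]],\cdot)$. Inspecting coefficients gives $T(\mathfrak{F}(t))={}^0F^{(m)}(t)$, $T(E(t))=H^{(m)}(\mathbb{L},t)$ and $T(\psi(E(t)))=H^{(m)}(\mathbb{L},\mathbb{L}t)$, the last because $\psi(E(t))=\sum_{d\ge0}(\mathbb{L}t)^d/\bigl((1-\mathbb{L})\cdots(1-\mathbb{L}^d)\bigr)$. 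Applying $T$ to $\mathfrak{F}(t)=E(t)*\psi(E(t))^{*-1}$ then yields ${}^0F^{(m)}(t)=H^{(m)}(\mathbb{L},t)\cdot H^{(m)}(\mathbb{L},\mathbb{L}t)^{-1}$, which is the claim, $H^{(m)}(\mathbb{L},\mathbb{L}t)$ being invertible in $R[[t]]$ since its constant term is $1$. I expect the only points requiring real care to be the local-triviality statements justifying multiplicativity of motives along the bundles above (standard for ${\rm GL}$ and its parabolic quotients) and, more than anything, keeping the powers of $\mathbb{L}$ consistent when passing between the twisted and the ordinary product; the latter is mechanical once $T$ and $\psi$ are set up as here, but it is the easiest place to slip.
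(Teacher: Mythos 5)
Your proposal is correct and follows essentially the same route as the paper: the stratification of $\mathcal{N}^{(m)}(V)\times V$ by $S_e(V)$, the homogeneous-bundle computation of $[S_e(V)]$, the identification $S_d(V)/{\rm GL}(V)={}^0{\rm Hilb}^{(m)}(V)$, the inversion of the nullcone series via \cite[Corollary 3.2]{GR} in the twisted ring, and the transport to the ordinary product via $T$. The only differences are expository — you make explicit that $t\mapsto\mathbb{L}t$ is an endomorphism of $(R^{\rm tw}[[t]],*)$ and that $T$ is a ring isomorphism, which the paper uses implicitly.
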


\begin{corollary}\label{corfunctional} The series $^0F^{(m)}(t)\in 1+tR[[t]]$ is determined by the functional equation
$$^0F^{(m)}(t)=1+t\cdot{\prod_{k=1}^m}{}^0F^{(m)}(\mathbb{L}^{1-k}t).$$
We have $$[{}^0{\rm Hilb}^{(m)}(\mathbb{C}^d)]={}^0h_d^{(m)}(\mathbb{L})\mbox{ and }[{\rm Hilb}^{(m)}(\mathbb{C}^d)]=h_d^{(m)}(\mathbb{L})$$
for polynomials $h_d^{(m)}(q),{}^0h_d^{(m)}(q)\in\mathbb{Z}[q]$ related by
$${}^0h_d^{(m)}(q)=q^{(m-1)d^2+d}\cdot h_d^{(m)}(q^{-1}).$$
\end{corollary}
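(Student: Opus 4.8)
The plan is twofold: \emph{(i)} deduce the functional equation from the preceding theorem together with a first-order $q$-difference equation satisfied by $H^{(m)}$, and \emph{(ii)} obtain the polynomiality and the duality formula by specializing the analogous identity for ordinary noncommutative Hilbert schemes (Theorem~\ref{thmold}) under $q\mapsto q^{-1}$. For \emph{(i)}, write $H^{(m)}(q,t)=\sum_{d\geq 0}b_d(q)t^d$ with $b_d(q)=q^{-(m-1)d(d-1)/2}/\prod_{i=1}^d(1-q^i)$. Comparing consecutive coefficients gives $(1-q^d)b_d(q)=q^{-(m-1)(d-1)}b_{d-1}(q)$, and therefore
$$H^{(m)}(q,t)-H^{(m)}(q,qt)=\sum_{d\geq 1}(1-q^d)b_d(q)t^d=t\cdot H^{(m)}(q,q^{1-m}t).$$
Setting $q=\mathbb{L}$ (legitimate in $R$, where $\mathbb{L}$ and every $1-\mathbb{L}^i$ are units) and dividing by the unit $H^{(m)}(\mathbb{L},\mathbb{L}t)\in 1+tR[[t]]$, the preceding theorem turns this into
$${}^0F^{(m)}(t)=1+t\cdot\frac{H^{(m)}(\mathbb{L},\mathbb{L}^{1-m}t)}{H^{(m)}(\mathbb{L},\mathbb{L}t)}=1+t\cdot\prod_{k=1}^m\frac{H^{(m)}(\mathbb{L},\mathbb{L}^{1-k}t)}{H^{(m)}(\mathbb{L},\mathbb{L}^{2-k}t)}=1+t\cdot\prod_{k=1}^m{}^0F^{(m)}(\mathbb{L}^{1-k}t),$$
where the middle product is a telescoping rewriting of the preceding ratio and each of its factors is identified using the preceding theorem. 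Uniqueness is immediate, since the equation expresses the coefficient of $t^d$ on the left in terms of coefficients of degree $<d$.

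For \emph{(ii)}, recall from the affine paving of Section~\ref{nchilb} that $[{\rm Hilb}^{(m)}(\mathbb{C}^d)]=h_d^{(m)}(\mathbb{L})$ for the genuine polynomial $h_d^{(m)}(q)=\sum_T q^{\dim S_T}\in\mathbb{Z}_{\geq 0}[q]$, the sum running over $d$-element trees, with $h_0^{(m)}=1$ and, since ${\rm Hilb}^{(m)}(\mathbb{C}^d)$ is irreducible of dimension $(m-1)d^2+d$ and admits an affine paving, $\deg h_d^{(m)}=(m-1)d^2+d$; moreover the recursion on trees underlying Theorem~\ref{thmold} is purely combinatorial, so its functional equation holds verbatim as an identity of these polynomials over $\mathbb{Z}[q,q^{-1}]$,
$$\sum_{d\geq 0}q^{-(m-1)d(d+1)/2-d}h_d^{(m)}(q)t^d=1+t\cdot\prod_{k=1}^m\Bigl(\sum_{d\geq 0}q^{-(m-1)d(d+1)/2-d}h_d^{(m)}(q)(q^{k-1}t)^d\Bigr).$$
Substituting $q\mapsto q^{-1}$ turns the left-hand side into $\widetilde G(q,t):=\sum_{d\geq 0}q^{(m-1)d(d+1)/2+d}h_d^{(m)}(q^{-1})t^d$ and each factor $q^{k-1}$ into $q^{1-k}$, so $\widetilde G(q,t)$ satisfies the functional equation of \emph{(i)} over $\mathbb{Z}[q,q^{-1}]$. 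Writing the coefficient of $t^d$ in $\widetilde G$ as $q^{-(m-1)d(d-1)/2}\cdot{}^0h_d^{(m)}(q)$ forces
$${}^0h_d^{(m)}(q)=q^{(m-1)d(d-1)/2+(m-1)d(d+1)/2+d}h_d^{(m)}(q^{-1})=q^{(m-1)d^2+d}h_d^{(m)}(q^{-1}),$$
which is the reciprocal polynomial of $h_d^{(m)}$ and, because $\deg h_d^{(m)}=(m-1)d^2+d$, lies in $\mathbb{Z}[q]$.

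Finally, specialize $q=\mathbb{L}$ in $\widetilde G$ — now legitimate, since the ${}^0h_d^{(m)}$ are polynomials: the result lies in $1+tR[[t]]$ and satisfies the functional equation of \emph{(i)}, hence by the uniqueness proved there it equals ${}^0F^{(m)}(t)$. Comparing coefficients of $t^d$ with the defining expansion ${}^0F^{(m)}(t)=\sum_{d\geq 0}\mathbb{L}^{-(m-1)d(d-1)/2}[{}^0{\rm Hilb}^{(m)}(\mathbb{C}^d)]t^d$ yields both $[{}^0{\rm Hilb}^{(m)}(\mathbb{C}^d)]={}^0h_d^{(m)}(\mathbb{L})$ and the relation ${}^0h_d^{(m)}(q)=q^{(m-1)d^2+d}h_d^{(m)}(q^{-1})$. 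I expect the only genuine obstacle to be the passage, in step \emph{(ii)}, from the motivic identity of Theorem~\ref{thmold} to an honest identity of polynomials in $q$ that may be specialized at $q\mapsto q^{-1}$; this is exactly why one must appeal to the explicit affine paving, which furnishes genuine polynomials of controlled degree, rather than merely to the fact that the motives in question are polynomials in $\mathbb{L}$.
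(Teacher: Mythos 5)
Your proof is correct and follows essentially the same route as the paper: part \emph{(i)} is the identical $q$-difference/telescoping argument for $H^{(m)}$, and part \emph{(ii)} fills in, in the natural way, the paper's one-line comparison of the two functional equations under $q\mapsto q^{-1}$. The extra care you take in passing from the motivic identity of Theorem~\ref{thmold} to an honest polynomial identity (via the affine paving and the degree bound $\deg h_d^{(m)}=(m-1)d^2+d$, which guarantees ${}^0h_d^{(m)}\in\mathbb{Z}[q]$) is a detail the paper leaves implicit, and you handle it correctly.
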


\begin{proof} The $t^d$-coefficients $c_d(q)$ of $H^{(m)}(q,t)$ obviously satisfy
$$c_d(q)=\frac{q^{-(m-1)(d-1)}}{1-q^d}c_{d-1}(q),$$
and thus
$$H^{(m)}(q,t)-H^{(m)}(q,qt)=tH^{(m)}(q,q^{1-m}t),$$
which implies $$\frac{H^{(m)}(q,t)}{H^{(m)}(q,qt)}=1+t\cdot\frac{H^{(m)}(q,t)}{H^{(m)}(q,qt)}\cdot\ldots\cdot\frac{H^{(m)}(q,q^{1-m}t)}{H^{(m)}(q,q^{2-m}t)},$$
proving the first claim. Using this functional equation and the one in Theorem \ref{thmold}, we first  observe $\mathbb{L}$-polynomiality of the motives, and then the claimed relation between the polynomials by comparing coefficients.\end{proof}

\section{Construction of affine paving}\label{paving}

We recall the affine paving
$${\rm Hilb}^{(m)}(V)=\bigcup_TS_T,$$
the union ranging over all $d$-element trees $T\subset\Omega$. We will now show 
\begin{theorem}\label{thmpaving} The intersection of any $S_T$ with ${}^0{\rm Hilb}^{(m)}(V)$ is isomorphic to an affine space. Consequently, these intersections provide an affine paving of the variety ${}^0{\rm Hilb}^{(m)}(V)$.
\end{theorem}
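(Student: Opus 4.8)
The plan is to show that $S_T \cap {}^0\mathrm{Hilb}^{(m)}(V)$ is an affine space by pinning down explicitly, for each $d$-element tree $T$, which of the affine coordinates on $\widetilde{S}_T$ survive after imposing the nilpotency condition $\varphi_* \in \mathcal{N}^{(m)}(V)$, and verifying that the surviving coordinates are free (i.e.\ the nilpotency equations, restricted to the slice $\widetilde{S}_T$, cut out a linear subspace after a triangular change of variables). First I would recall that on $\widetilde{S}_T$ the tuple $(\varphi_*, v)$ is completely determined by the scalars $a^\omega_{\omega',k}$ appearing in the expansions $\varphi_k(e_\omega) = \sum_{\omega' <_{\mathrm{lex}} \omega k} a^\omega_{\omega',k} e_{\omega'}$ (for $\omega \in T$, $\omega k \notin T$), and that $\widetilde{S}_T \to S_T$ is an isomorphism onto a locally closed affine subset, so it suffices to understand the locus inside this affine coordinate space where all $\varphi_k$ are simultaneously nilpotent.

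The key structural observation I would exploit is the one already used in Section~\ref{def}: a tuple lies in $\mathcal{N}^{(m)}(V)$ if and only if there is a complete flag $F_\bullet$ with $\varphi_k(F_i) \subset F_{i-1}$ for all $k$; equivalently, $\mathbb{C}\langle \varphi_1,\dots,\varphi_m\rangle_+ $ acts nilpotently, i.e.\ every sufficiently long word in the $\varphi_k$ annihilates $V$. Since $v = e_\emptyset$ is cyclic and the $e_\omega$ for $\omega \in T$ are obtained from $v$ by applying the words $\omega$, the natural candidate flag is the one whose $i$-th step is spanned by the $i$ lexicographically-smallest words of $T$; conditions (2) and (3) defining $\widetilde{S}_T$ say precisely that each $\varphi_k$ is \emph{already} compatible with the associated graded of this flag in the sense of lowering the lex-position. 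So I would argue that on $\widetilde{S}_T$ the operators are automatically ``strictly lex-lowering'' on the subspace-generation filtration, and nilpotency becomes a \emph{finite} list of polynomial equations: the vanishing of $e_{\omega''}$-components (for $\omega'' \in T$) of $\varphi_{k_\ell}\cdots\varphi_{k_1} e_\emptyset$ for all words of length $> $ (something like the depth of $T$), but only those not forced to vanish by the tree conditions. The essential claim is that these equations can be solved to express a subset of the $a$-coordinates as polynomials in the remaining ones, triangularly with respect to lex order — hence $S_T \cap {}^0\mathrm{Hilb}^{(m)}(V)$ is the graph of a morphism over an affine space, thus itself an affine space.

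Concretely I would proceed by induction on the ordered cells (or on $d$), mimicking the Harder--Narasimhan decomposition of Section~\ref{motive1}: the generation filtration $0 = U_0 \subset U_1 \subset \dots \subset U_d = V$ with $U_i = \langle e_{\omega_1},\dots,e_{\omega_i}\rangle$ ($\omega_1 <_{\mathrm{lex}} \dots <_{\mathrm{lex}} \omega_d$ the words of $T$). Each $\varphi_k$ maps $U_i$ into $U_{i-1}$ on the nose for the ``tree edges'' and into something contained in $U_{i-1}$ for the non-edges by condition (3); nilpotency is then equivalent to the upper-triangular-with-zero-diagonal structure being genuine, which amounts to certain leading coefficients in the $a^\omega_{\omega,k}$ (the ``diagonal'' entries $\varphi_k(e_\omega) \in \dots + a^\omega_{?}e_{\omega}$-type terms that could destroy strict lowering) vanishing — but inspecting condition (3) shows $\omega' <_{\mathrm{lex}} \omega k$ already excludes $\omega' = \omega$ in the relevant cases, so one needs a careful bookkeeping of exactly when $\omega k \in T$ versus not. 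I expect the main obstacle to be precisely this bookkeeping: showing that after imposing nilpotency the remaining equations form a system solvable by a \emph{triangular} (hence affine-space-preserving) substitution rather than a genuinely nonlinear variety. To control this I would bring in the motivic computation of Corollary~\ref{corfunctional}: since ${}^0F^{(m)}(t)$ is already known to be $\mathbb{L}$-polynomial with the correct degrees, and the cells $S_T \cap {}^0\mathrm{Hilb}^{(m)}(V)$ stratify ${}^0\mathrm{Hilb}^{(m)}(V)$, a dimension/point-count over finite fields forces each cell to be an affine space once one knows each cell is at least a \emph{linear} locus up to the triangular substitution — the functional equation pins down $\sum_T \mathbb{L}^{\dim (S_T \cap {}^0\mathrm{Hilb})}$, so any cell that failed to be affine would have to be compensated, which a local analysis of the defining equations rules out. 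This is the ``using this description of the motives at a key point'' step flagged in the introduction, and I would make it the crux of the argument: reduce to showing each cell is irreducible of the expected dimension and cut out by equations linear in a suitable coordinate (which the triangular structure gives), then invoke the known generating series to upgrade this to ``affine space''.
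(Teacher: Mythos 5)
There is a genuine gap at the crux of your argument. You assert that the nilpotency equations, restricted to $\widetilde{S}_T$, ``can be solved to express a subset of the $a$-coordinates as polynomials in the remaining ones, triangularly with respect to lex order,'' but this is exactly the content of the theorem and you give no mechanism for it; indeed the paper never proves the inclusion $S_T\cap{}^0{\rm Hilb}^{(m)}(V)\subseteq(\text{candidate affine cell})$ by manipulating equations at all. Note also that your candidate flag is wrong in a way that matters: since $\omega<_{\rm lex}\omega k$, condition (2) says $\varphi_k(e_\omega)=e_{\omega k}$ \emph{raises} the lex position along tree edges, so the operators are not compatible with the filtration by lex-initial segments of $T$, and the generic point of $S_T$ is not nilpotent. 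What the paper actually does is: (a) guess the explicit \emph{linear coordinate subspace} $S_T'\subseteq S_T$ defined by requiring $\varphi_k(e_\omega)\in\langle e_{\omega'}:\omega'<_{\rm lex}\omega k,\ \omega'\npreceq\omega\rangle$ for $\omega k\notin T$ (i.e.\ additionally killing the components at left subwords of $\omega$); (b) prove $S_T'\subseteq\mathcal{N}^{(m)}(V)$ by introducing a \emph{different} total order $\triangleleft$ on $T$ with respect to which all $\varphi_k$ become strictly lower triangular; and (c) never prove the reverse inclusion cell by cell.

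Your appeal to the motivic computation is also not set up correctly. The identity $\sum_T[S_T\cap{}^0{\rm Hilb}^{(m)}(\mathbb{C}^d)]=[{}^0{\rm Hilb}^{(m)}(\mathbb{C}^d)]$ holds tautologically because the cells stratify the variety, so the functional equation does not ``pin down $\sum_T\mathbb{L}^{\dim(S_T\cap{}^0{\rm Hilb})}$'' and there is nothing to be ``compensated.'' The motive becomes useful only after step (a)--(b) above: one has a closed subset $S(V)=\bigcup_TS_T'\subseteq{}^0{\rm Hilb}^{(m)}(V)$ that is a disjoint union of affine spaces of combinatorially computable dimensions $|D(T)|$; one then verifies, via the grafting recursion $T=\{\emptyset\}\cup\bigcup_kkT_k$ and the resulting recursion for $|D(T)|$, that $\sum_T\mathbb{L}^{-(m-1)|T|(|T|-1)/2+|D(T)|}t^{|T|}$ satisfies the functional equation of Corollary \ref{corfunctional}, hence equals ${}^0F^{(m)}(t)$, forcing $[Q(V)]=0$ for the complement $Q(V)$ and hence $Q(V)=\emptyset$. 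Your proposal is missing both the explicit description of $S_T'$ (the condition $\omega'\npreceq\omega$ and the order $\triangleleft$) and the combinatorial recursion that makes the generating-function comparison possible, and these are the two ideas that carry the proof.
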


\begin{proof} For words $\omega,\omega'\in\Omega$, write $\omega'\preceq\omega$ if $\omega'$ is a left subword of $\omega$. More precisely, we claim that $${}^0S_T=S_T\cap{}^0{\rm Hilb}^{(m)}(V)$$
consists of all $(\varphi_*,v)\in S_T$ such that
$$\varphi_k(e_\omega)\in\langle e_{\omega'}\, :\, \omega'\in T,\, \omega'<_{\rm lex}\omega k,\; \omega'\npreceq\omega\}$$ whenever $\omega\in T$, $\omega k\not\in T$. Denote by $S_T'$ the set of all such tuples $(\varphi_*,v)$. We first prove that $S_T'\subset {}^0S_T$. We define a new total order $\triangleleft$ on $T$ by the conditions
$$\omega\triangleleft\omega\omega'$$
and
$$\omega k\omega'\triangleleft\omega l\omega''$$
if $k>l$. The following relation between the orderings $<_{\rm lex}$, $\preceq$ and $\triangleleft$ is then immediate: if $\omega'<_{\rm lex}\omega k$ and $\omega'\npreceq \omega$, then $\omega\triangleleft\omega'$. We enumerate the tree 
$$T=\{\omega_1,\ldots,\omega_d\}$$
in the ordering $\triangleleft$ and consider the ordered basis $B$ with elements $e_l=e_{\omega_l}$ for $l=1,\ldots,d$ of $V$. For $(\varphi_*,v)\in S_T'$, by definition of this set, all operators $\varphi_k$ are represented by strictly lower triangular matrices with respect to $B$. This proves that $S_T'$ is contained in ${}^0{\rm Hilb}^{(m)}(V)$.\\[1ex]
Obviously $S_T'$ is closed in $S_T$, and thus 
$$S(V):=\bigcup_TS_T'$$
is a closed subset of ${}^0{\rm Hilb}^{(m)}(V)$; we denote by $Q(V)$ its complement. We will now prove by combinatorial means that, in the Grothendieck ring $K_0({\rm Var}_\mathbb{C})$, we have $[Q(V)]=0$. This implies that $Q(V)$ is empty, and thus that $S(V)$ already exhausts ${}^0{\rm Hilb}^{(m)}(V)$, proving the claim.\\[1ex]
For a tree $T$ as before, denote by $D(T)$ the set of all triples $$(\omega,k,\omega')\subset\Omega\times\{1,\ldots,m\}\times \Omega$$ such that $$\omega k\not\in T,\;\omega'\in T,\; \omega'<_{\rm lex}\omega k,\; \omega'\npreceq\omega.$$ Thus $|D(T)|$ is the dimension of $S_T'$. Every tree $T\not=\emptyset$ can be written uniquely in the form
$$T=\{\emptyset\}\cup\bigcup_{k=1}^mkT_k$$
for trees $T_1,\ldots,T_m$ (then $T$ is the grafting of $T_1,\ldots,T_m$ in the terminology of \cite[Section 5]{R}), and we have  $$|T|=\sum_{k=1}^m|T_k|+1.$$ It is then easy to see that the set $D(T)$ can be written aus the union of 
$$\{(k\omega,l,k\omega' ):\, 1\leq k\leq m,\, (\omega,l,\omega')\in D(T_k)\}$$
and
$$\{(k\omega,l,k'\omega' ):\, 1\leq k'<k\leq m,\, \omega\in T_k,\, \omega l\not\in T_k, \, \omega'\in T_{k'}\}.$$
The number of pairs $(\omega,k)\in \Omega\times\{1,\ldots,m\}$ such that $\omega\in T$, $\omega k\not\in T$ equals $(m-1)|T|+1$, thus we find
$$|D(T)|=\sum_k|D(T_k)|+\sum_{k'<k}|T_{k'}|((m-1)|T_k|+1).$$
We form the generating series of motives
$$F'(t)=\sum_{d\geq 0}\mathbb{L}^{-(m-1)d(d-1)/2}\cdot[S(\mathbb{C}^d)]t^d=\sum_{T}\mathbb{L}^{-(m-1)|T|(|T|-1)/2}\cdot\mathbb{L}^{|D(T)|}t^{|T|}.$$
Writing $T$ as the grafting of $T_1,\ldots,T_m$ as above, we can rewrite
$$F'(t)=1+\sum_{T_1,\ldots,T_m}\mathbb{L}^{f(T_1,\ldots,T_m)}t^{\sum_k|T_k|+1},$$
where $f(T_1,\ldots,T_m)$ is given as
$$-(m-1)(\sum_k|T_k|+1)\sum_k|T_k|/2+\sum_k|D(T_k)|+\sum_{k'<k}|T_{k'}|((m-1)|T_k|+1),$$
which easily simplifies to
$$-(m-1)\sum_k|T_k|(|T_k|-1)/2+\sum_k|D(T_k)|-\sum_k(k-1)|T_k|.$$
This implies
$$F'(t)=1+t\sum_{T_1,\ldots,T_m}\mathbb{L}^{-(m-1)\sum_k|T_k|(|T_k|-1)/2+\sum_k|D(T_k)|-\sum_k(k-1)|T_k|}t^{\sum_k|T_k|}=$$
$$=1+t\cdot\prod_{k=1}^mF'(\mathbb{L}^{1-k}t),$$
thus $F'(t)={}^0F(t)$ by Corollary \ref{corfunctional}.
Comparing coefficients, we find an equality of motives
$$[S(\mathbb{C}^d)]=[{}^0{\rm Hilb}^{(m)}(\mathbb{C}^d)],$$
as claimed.
\end{proof}

\section{Construction of resolution of singularities}\label{resolution}

Recall the Springer resolution of the variety of nilpotent linear operators, which consists of pairs of a nilpotent operator $\varphi$ on $V$ and a complete flag $0=F_0\subset F_1\subset\ldots\subset F_d=V$ which are compatible (in the sense that $\varphi(F_i)\subset F_{i-1}$ for all $i=1,\ldots,d$); this defines a homogeneous vector bundle over the variety ${\rm Fl}(V)$ of complete flags in $V$, with fibre isomorphic to $\mathfrak{n}(V)$, the space of operators compatible with a fixed flag $F_*^0$. We will imitate this construction in order to construct a resolution of singularities of ${}^0{\rm Hilb}^{(m)}(V)$.\\[1ex]
So define ${Y}^{(m)}(V)$ as the variety of tuples
$$((\varphi_*,v),F_*)\in(\mathcal{N}^{(m)}(V)\times V)_{\rm st}\times{\rm Fl}(V)$$
satisfying
$$\varphi_k(F_i)\subset F_{i-1}$$
for all $k=1,\ldots,m$, $i=1,\ldots,d$. Projection to $${\rm Fl}(V)\simeq{\rm GL}(V)/B(V)$$ (for $B(V)\subset{\rm GL}(V)$ the Borel subgroup fixing $F_*^0$) realizes $Y^{(m)}(V)$ as a homogeneous bundle with fibre
$$(\mathfrak{n}(V)^m\times V)_{\rm st}=(\mathfrak{n}(V)^m\times V)\cap(\mathcal{N}^{(m)}(V)\times V)_{\rm st}.$$
We can thus rewrite
$$Y^{(m)}(V)\simeq{\rm GL}(V)\times^{B(V)}(\mathfrak{n}(V)^m\times V)_{\rm st}.$$
Since $(\mathcal{N}^{(m)}(V)\times V)_{\rm st}$ admits a geometric ${\rm GL}(V)$-quotient, it also admits a geometric $B(V)$-quotient by \cite[(2.5)]{ES}. Thus its closed subset $(\mathfrak{n}(V)^m\times V)_{\rm st}$ also admits a $B(V)$-quotient, and we find
$$(\mathfrak{n}(V)^m\times V)_{\rm st}/B(V)\simeq({\rm GL}(V)\times^{B(V)}(\mathfrak{n}(V)^m\times V)_{\rm st})/{\rm GL}(V)\simeq Y^{(m)}(V)/{\rm GL}(V).$$
The map $$Y^{(m)}(V)\rightarrow (\mathcal{N}^{(m)}(V)\times V)_{\rm st}$$
forgetting the flag thus induces a projective map of quotients
$$\pi:Z^{(m)}(V):=Y^{(m)}(V)/{\rm GL}(V)\rightarrow{}^0{\rm Hilb}^{(m)}(V).$$
\begin{proposition} The map $\pi:Z^{(m)}(V)\rightarrow {}^0{\rm Hilb}^{(m)}(V)$ is a resolution of singularities.
\end{proposition}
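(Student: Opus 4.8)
The plan is to establish the three properties that make $\pi$ a resolution of singularities of ${}^0{\rm Hilb}^{(m)}(V)$: that $Z^{(m)}(V)$ is a smooth irreducible variety, that $\pi$ is proper, and that $\pi$ is birational. Properness has already been recorded above, $\pi$ being the morphism of geometric quotients induced by the ${\rm GL}(V)$-equivariant projective map $Y^{(m)}(V)\to(\mathcal{N}^{(m)}(V)\times V)_{\rm st}$ that forgets the flag; the work therefore lies in the first and third points. Note also that ${}^0{\rm Hilb}^{(m)}(V)$ is irreducible, being the geometric quotient of the open subvariety $(\mathcal{N}^{(m)}(V)\times V)_{\rm st}$ of the irreducible variety $\mathcal{N}^{(m)}(V)\times V$.

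For smoothness and irreducibility of $Z^{(m)}(V)$, I would start from the homogeneous bundle description $Y^{(m)}(V)\simeq{\rm GL}(V)\times^{B(V)}(\mathfrak{n}(V)^m\times V)_{\rm st}$. The fibre $(\mathfrak{n}(V)^m\times V)_{\rm st}$ is a nonempty open subset of the affine space $\mathfrak{n}(V)^m\times V$ — nonempty, for instance, by taking $\varphi_1\in\mathfrak{n}(V)$ regular nilpotent with kernel filtration $F_*^0$, the remaining $\varphi_k$ equal to zero, and $v$ a cyclic vector — hence it is smooth and irreducible, and since ${\rm GL}(V)\to{\rm Fl}(V)$ is a Zariski-locally trivial $B(V)$-bundle, $Y^{(m)}(V)$ is a smooth irreducible variety. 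The ${\rm GL}(V)$-action on $(\mathcal{N}^{(m)}(V)\times V)_{\rm st}$ is free, since an automorphism of a pair $(\varphi_*,v)$ with $v$ cyclic fixes $\mathbb{C}\langle\varphi_1,\ldots,\varphi_m\rangle v=V$; the quotient map $(\mathcal{N}^{(m)}(V)\times V)_{\rm st}\to{}^0{\rm Hilb}^{(m)}(V)$ is thus a principal ${\rm GL}(V)$-bundle, and so is its pullback $Y^{(m)}(V)\to Z^{(m)}(V)$ along $\pi$. Smoothness and irreducibility of $Z^{(m)}(V)$ then follow from those of $Y^{(m)}(V)$, and in passing one reads off $\dim Z^{(m)}(V)=d(d-1)/2+md(d-1)/2+d-d^2=(m-1)d(d-1)/2$.

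The heart of the argument is birationality, which imitates the classical fact that the Springer resolution is an isomorphism over the regular nilpotent orbit. Let $U'\subset(\mathcal{N}^{(m)}(V)\times V)_{\rm st}$ be the ${\rm GL}(V)$-stable open set of pairs $(\varphi_*,v)$ for which $\varphi_1$ is a regular nilpotent operator, i.e. consists of a single Jordan block; it is nonempty by the tuple exhibited above, and dense because $(\mathcal{N}^{(m)}(V)\times V)_{\rm st}$ is dense in the irreducible variety $\mathcal{N}^{(m)}(V)\times V$. If $\varphi_1$ is regular nilpotent, the only complete flag compatible with $\varphi_1$ is the kernel filtration $F_i=\ker(\varphi_1^i)$; since every tuple in $\mathcal{N}^{(m)}(V)$ admits at least one compatible complete flag, that flag must be this one, and in particular it is compatible with all $\varphi_k$ simultaneously. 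Hence over $U'$ the forgetful map $Y^{(m)}(V)\to(\mathcal{N}^{(m)}(V)\times V)_{\rm st}$ is bijective, with inverse the morphism $(\varphi_*,v)\mapsto((\varphi_*,v),(\ker\varphi_1^i)_i)$, so it restricts to an isomorphism onto $U'$; taking ${\rm GL}(V)$-quotients, $\pi$ restricts to an isomorphism over the dense open image of $U'$ in ${}^0{\rm Hilb}^{(m)}(V)$. This proves birationality, and with it the proposition.

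The step I expect to be the main obstacle is the density of $U'$, equivalently the assertion that tuples whose first component is regular nilpotent are dense in the nullcone $\mathcal{N}^{(m)}(V)$. This rests on the description $\mathcal{N}^{(m)}(V)={\rm GL}(V)\cdot\mathfrak{n}(V)^m$ together with the irreducibility of $\mathcal{N}^{(m)}(V)$ recalled from \cite{GR}: given these, openness of the regular-nilpotent condition reduces the claim to the elementary observation that a generic tuple of strictly upper triangular matrices has regular nilpotent first component, combined with the fact that a dominant morphism carries a dense open subset to a dense subset.
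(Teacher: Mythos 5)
Your proof is correct and follows essentially the same route as the paper: smoothness and irreducibility of $Z^{(m)}(V)$ via the homogeneous-bundle description of $Y^{(m)}(V)$ over ${\rm Fl}(V)$, and birationality by exhibiting a dense open locus of $(\mathcal{N}^{(m)}(V)\times V)_{\rm st}$ over which exactly one compatible flag exists. The only (harmless) difference is the choice of that locus: you take the tuples with $\varphi_1$ regular nilpotent, whereas the paper uses the slightly larger locus of tuples inducing nonzero maps $F^0_i/F^0_{i-1}\rightarrow F^0_{i-1}/F^0_{i-2}$ for all $i$; your choice has the advantage that openness and density in the base are immediate from the condition $\varphi_1^{d-1}\neq 0$ and the irreducibility of $\mathcal{N}^{(m)}(V)$.
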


\begin{proof} The map $\pi$ is surjective since $${\rm GL}(V)\mathfrak{n}(V)^m=\mathcal{N}^{(m)}(V).$$ The variety $Z^{(m)}(V)$ is irreducible and smooth since $\mathfrak{n}(V)^m\times V$ is so. Moreover, if $(\varphi_*)$ is a tuple of operators which induces non-zero tuples of maps $$(\overline{\varphi}_k:F^0_i/F^0_{i-1}\rightarrow F^0_{i-1}/F^0_{i-2})_k$$
for $i=2,\ldots,d$, then $F^0_*$ is the only flag compatible with all $\varphi_k$, and thus $\pi$ is bijective over the corresponding locus.\end{proof}

\section{Calculation of motives 2}\label{motive2}

To calculate the motives of the $$Z^{(m)}(V)=(\mathfrak{n}(V)^m\times V)_{\rm st}/B(V),$$ we follow essentially the same strategy as in Section \ref{motive1}. We stratify $\mathfrak{n}(V)^m\times V$ by the dimension of the subspace generated from the vector by the linear operators:
$$P_e(V)=\{(\varphi_*,v)\, :\, \dim\mathbb{C}\langle\varphi_1,\ldots,\varphi_k\rangle v=e\}.$$
The set $P_e(V)$ is the image of 
$$\widehat{P}_e(V)=\{(\varphi_*,v,U)\, :\, \mathbb{C}\langle\varphi_1,\ldots,\varphi_k\rangle v=U\}\subset \mathfrak{n}(V)^m\times V\times{\rm Gr}_e(V)$$
under the projective map forgetting the subspace, which is bijective on points by definition. On the other hand, we have the projection $$p:\widehat{P}_e(V)\rightarrow{\rm Gr}_e(V),$$ which is equivariant for the natural ${B}(V)$-action. Under this action, ${\rm Gr}_e(V)$ decomposes into Schubert cells. Namely, we choose a basis $v_1,\ldots,v_d$ compatible with the flag $F_*^0$ in the sense that $F^0_i=\langle v_1,\ldots, v_i\rangle$ for all $i$. For an $e$-element subset $I=\{i_1<\ldots<i_e\}$ of $\{1,\ldots,d\}$, we denote by $U_I\subset V$ the subspace generated by the $v_i$ for $i\in I$. Then ${\rm Gr}_e(V)$ decomposes under $B(V)$ into orbits $\mathcal{O}_I=B(V)U_I$. We denote by $\widehat{P}_e(V)_I$ the inverse image under $p$ of $\mathcal{O}_I$, which is thus a homogeneous bundle over $\mathcal{O}_I$. To determine the fibre of this bundle, we consider the complement $W_I$ to $U_I$ generated by all $v_i$ for $$i\in\overline{I}=\{1,\ldots,e\}\setminus I=\{j_1<\ldots<j_{d-e}\},$$ and represent vectors and linear operators with respect to the ordered basis $$(v_{i_1},\ldots,v_{i_e},v_{j_1},\ldots,v_{j_{d-e}}).$$ Then $p^{-1}(U_I)$ consists of all tuples

$$(\left[\begin{array}{ll}\varphi'_*&\zeta_*\\ 0&\varphi''_*\end{array}\right],\left[{v'\atop 0}\right])$$
such $(\varphi'_*,v')\in (\mathfrak{n}(U_I)^m\times U_I)_{\rm st}$, $(\varphi''_*)\in \mathfrak{n}(W_I)^m$, $v'\in U_I$, and  $\zeta_1,\ldots,\zeta_m$ map each $v_i$ for $i\in\overline{I}$ into the span of the $v_j$ for $i<j\in I$. Denoting  $$\iota(I)=|\{((i,j)\, :\, \overline{I}\ni i<j\in I\}|,$$
this proves that
$$p^{-1}(U_I)\simeq P_e(U_I)\times \mathfrak{n}(W_I)^m\times\mathbb{C}^{m\iota(I)}.$$
Denoting by $P_I\subset{B}(V)$ the stabilizer of $U_I$, we can conclude
$$\widehat{P}_e(V)_I\simeq{B}(V)\times^{P_I}(P_e(U_I)\times \mathfrak{n}(W_I)^m\times \mathbb{C}^{m\iota(I)}).$$
The group $P_I$ has unipotent radical isomorphic to $\mathbb{C}^{\iota(I)}$ and Levi isomorphic to $B(U_I)\times B(W_I)$. In the localized Grothendieck ring $R$, we thus find an identity
$$[P_e(V)]=[\widehat{P}_e(V)]=\frac{[{B}(V)]}{[P_I]}\cdot[P_e(U_I)]\cdot[\mathfrak{n}(W_I)^m]\cdot\mathbb{L}^{m\iota(I)}=$$
$$=\mathbb{L}^{(m-1)\iota(I)}\cdot\frac{[{B}(V)]}{[B(U_I)]\cdot[B(W_I)]}\cdot[P_e(U_I)\cdot][\mathfrak{n}(W_I)^m].$$
Since
$$[B(\mathbb{C}^d)]=\mathbb{L}^{d(d-1)/2}\cdot(\mathbb{L}-1)^d,$$
this can be made explicit as
$$[P_e(V)]=\mathbb{L}^{(m-1)\iota(I)+e(d-e)}\cdot[P_e(\mathbb{C}^e)]\cdot[\mathfrak{n}(\mathbb{C}^{d-e})^m].$$
Since the $P_e(V)$ stratify $\mathfrak{n}(V)^m\times V$, we thus have

$$\mathbb{L}^{d}\cdot[\mathfrak{n}(\mathbb{C}^d)^m]=\sum_{e=0}^d\sum_{|I|=e}\mathbb{L}^{(m-1)\iota(I)+e(d-e)}\cdot[P_e(\mathbb{C}^e)]\cdot[\mathfrak{n}(\mathbb{C}^{d-e})^m].$$
By the definition of $q$-binomial coefficients, we have
$$\sum_{|I|=e}\mathbb{L}^{(m-1)\iota(I)}=\left[{d\atop e}\right]_{\mathbb{L}^{m-1}},$$
thus the previous summation simplifies to
$$\mathbb{L}^{d}\cdot[\mathfrak{n}(\mathbb{C}^d)^m]=\sum_{e=0}^d\mathbb{L}^{e(d-e)}\cdot\left[{d\atop e}\right]_{\mathbb{L}^{m-1}}\cdot[P_e(\mathbb{C}^e)]\cdot[\mathfrak{n}(\mathbb{C}^{d-e})^m].$$

We have $$(\mathfrak{n}(V)^m\times V)_{\rm st}=P_d(V),$$ and thus
$$\frac{\mathbb{L}^d\cdot[\mathfrak{n}(\mathbb{C}^d)^m]}{[B(\mathbb{C}^d)]}=\sum_{e=0}^d\left[{d\atop e}\right]_{\mathbb{L}^{m-1}}\frac{[\mathfrak{n}(\mathbb{C}^{d-e})^m]}{[B(\mathbb{C}^{d-e})]}\cdot[{Z}^{(m)}(\mathbb{C}^e)].$$
Using the $q$-Pochhammer symbol
$$(a;q)_n=\prod_{k=0}^{n-1}(1-aq^k)$$
for $n\in\mathbb{N}\cup\{\infty\}$, the identity
$$\left[{d\atop e}\right]_q=\frac{(q;q)_d}{(q;q)_e(q;q)_{d-e}},$$
and the obvious equation $$[\mathfrak{n}(\mathbb{C}^d)]=\mathbb{L}^{d(d-1)/2},$$
we rewrite the above identity as
$$\frac{\mathbb{L}^d\cdot\mathbb{L}^{(m-1)d(d-1)/2}}{(\mathbb{L}^{m-1};\mathbb{L}^{m-1})_d}=\sum_{e=0}^d\frac{\mathbb{L}^{(m-1)(d-e)(d-e-1)/2}}{(\mathbb{L}^{m-1};\mathbb{L}^{m-1})_{d-e}}\cdot\frac{(\mathbb{L}-1)^e\cdot[Z^{(m)}(\mathbb{C}^e)]}{(\mathbb{L}^{m-1};\mathbb{L}^{m-1})_e}.$$
Defining
$$B(q,t)=\sum_{d\geq 0}\frac{q^{d(d-1)/2}t^d}{(q;q)_d},$$
we thus find an identity of generating functions
$$B(\mathbb{L}^{m-1},\mathbb{L}t)=B(\mathbb{L}^{m-1},t)\cdot\sum_{d\geq 0}\frac{[Z^{(m)}(\mathbb{C}^d)]}{(\mathbb{L}^{m-1},\mathbb{L}^{m-1})_d}((\mathbb{L}-1)t)^d.$$
One version of the classical $q$-binomial theorem \cite{A} reads
$$B(q,t)=(-t;q)_\infty,$$
and another one reads
$$\sum_{d\geq 0}\frac{(a;q)_d}{(q;q)_d}z^d=\frac{(az;q)_\infty}{(z;q)_\infty},$$
thus
$$\sum_{d\geq 0}\frac{[Z^{(m)}(\mathbb{C}^d)]}{(\mathbb{L}^{m-1},\mathbb{L}^{m-1})_d}((\mathbb{L}-1)t)^d=\frac{B(\mathbb{L}^{m-1},\mathbb{L}t)}{B(\mathbb{L}^{m-1},t)}=\sum_{d\geq 0}\frac{((-1)^d(\mathbb{L};\mathbb{L}^{m-1})_d}{(\mathbb{L}^{m-1};\mathbb{L}^{m-1})_d}t^d.$$
Comparing coefficients, we arrive at our main result.

\begin{theorem}\label{thmmotive2} For all $d$, we have 
$$[Z^{(m)}(\mathbb{C}^d)]=\frac{(\mathbb{L};\mathbb{L}^{m-1})_d}{(1-\mathbb{L}-1)}=\prod_{i=0}^{d-1}\frac{\mathbb{L}^{(m-1)i+1}-1}{\mathbb{L}-1}.$$
\end{theorem}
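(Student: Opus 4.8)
The plan is to obtain Theorem~\ref{thmmotive2} directly from the generating-function identity assembled just above, the only remaining steps being a coefficient comparison and a rewriting of the $q$-Pochhammer symbol. Concretely, I would extract the coefficient of $t^d$ from both sides of
$$\sum_{d\geq 0}\frac{[Z^{(m)}(\mathbb{C}^d)]}{(\mathbb{L}^{m-1};\mathbb{L}^{m-1})_d}\bigl((\mathbb{L}-1)t\bigr)^d=\sum_{d\geq 0}\frac{(-1)^d(\mathbb{L};\mathbb{L}^{m-1})_d}{(\mathbb{L}^{m-1};\mathbb{L}^{m-1})_d}\,t^d,$$
obtaining $[Z^{(m)}(\mathbb{C}^d)]\,(\mathbb{L}-1)^d=(-1)^d(\mathbb{L};\mathbb{L}^{m-1})_d$ after cancelling the common factor $(\mathbb{L}^{m-1};\mathbb{L}^{m-1})_d$ (a unit in $R$ when $m\geq 2$; the case $m=1$ is trivial since $Z^{(1)}(\mathbb{C}^d)$ is a point). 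Since $\mathbb{L}-1=-(1-\mathbb{L})$ is also a unit in $R$, I divide by $(\mathbb{L}-1)^d$; expanding $(\mathbb{L};\mathbb{L}^{m-1})_d=\prod_{i=0}^{d-1}(1-\mathbb{L}\cdot\mathbb{L}^{(m-1)i})=\prod_{i=0}^{d-1}(1-\mathbb{L}^{(m-1)i+1})$ and absorbing the $d$ signs one per factor then gives
$$[Z^{(m)}(\mathbb{C}^d)]=\prod_{i=0}^{d-1}\frac{\mathbb{L}^{(m-1)i+1}-1}{\mathbb{L}-1}=\prod_{i=1}^{d-1}\bigl(1+\mathbb{L}+\dots+\mathbb{L}^{(m-1)i}\bigr),$$
the $i=0$ factor being $1$; in particular the class is genuinely a polynomial in $\mathbb{L}$, as it must be.

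The substance behind the identity being invoked is the Harder-Narasimhan-type computation performed above, mirroring Section~\ref{motive1} with $\mathfrak{n}(V)$ in place of $\mathcal{N}^{(m)}(V)$: one stratifies $\mathfrak{n}(V)^m\times V$ by the loci $P_e(V)$, lifts each to the incidence variety $\widehat{P}_e(V)$ fibred over ${\rm Gr}_e(V)$, decomposes the base into the $B(V)$-Schubert cells $\mathcal{O}_I=B(V)U_I$, and identifies the fibre of $\widehat{P}_e(V)_I$ over $U_I$ with $P_e(U_I)\times\mathfrak{n}(W_I)^m\times\mathbb{C}^{m\iota(I)}$. Passing to classes in $R$, summing over the $e$-subsets $I$ via $\sum_{|I|=e}\mathbb{L}^{(m-1)\iota(I)}=\left[{d\atop e}\right]_{\mathbb{L}^{m-1}}$, isolating $P_d(V)=(\mathfrak{n}(V)^m\times V)_{\rm st}$, and dividing by $[B(\mathbb{C}^d)]=\mathbb{L}^{d(d-1)/2}(\mathbb{L}-1)^d$ produces the recursion displayed above; encoding it through $B(q,t)=\sum_{d\geq 0}q^{d(d-1)/2}t^d/(q;q)_d$ and applying the two forms of the $q$-binomial theorem, $B(q,t)=(-t;q)_\infty$ and $\sum_{d\geq 0}\frac{(a;q)_d}{(q;q)_d}z^d=\frac{(az;q)_\infty}{(z;q)_\infty}$ with $q=\mathbb{L}^{m-1}$, $z=-t$, $a=\mathbb{L}$, collapses the ratio $B(\mathbb{L}^{m-1},\mathbb{L}t)/B(\mathbb{L}^{m-1},t)$ to the single sum on the right.

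I expect the genuinely delicate point to be the bookkeeping in the bundle computation over a Schubert cell rather than the coefficient comparison: one must check that the off-diagonal block $\zeta_*$ of an operator respecting the base flag $F^0_*$ and stabilising $U_I$ is forced to send each $v_i$ with $i\in\overline{I}$ into the span of the $v_j$ with $i<j\in I$, producing precisely the factor $\mathbb{C}^{m\iota(I)}$, and that $P_I\subset B(V)$ has unipotent radical $\mathbb{C}^{\iota(I)}$ and Levi $B(U_I)\times B(W_I)$; getting the resulting power $(m-1)\iota(I)+e(d-e)$ of $\mathbb{L}$ right is exactly what converts the sum over Schubert cells into a $q$-binomial coefficient in the base $\mathbb{L}^{m-1}$, and an error there would propagate to the whole series. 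The sign tracking in the two uses of the $q$-binomial theorem --- the substitution $z=-t$ producing the $(-1)^d$ that must be reabsorbed when rewriting $1-\mathbb{L}^{(m-1)i+1}$ as $-(\mathbb{L}^{(m-1)i+1}-1)$ --- is the other thing to watch, but it is cosmetic.
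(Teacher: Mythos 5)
Your proposal matches the paper's argument exactly: the theorem is obtained by comparing $t^d$-coefficients in the generating-function identity derived from the stratification of $\mathfrak{n}(V)^m\times V$ by the $P_e(V)$, the Schubert-cell bundle computation giving the $\mathbb{L}^{m-1}$-binomial coefficients, and the two forms of the $q$-binomial theorem, and your coefficient extraction, sign bookkeeping, and unit considerations in $R$ (with the correct caveat for $m=1$) are all accurate. This is essentially the same proof as in the paper.
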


\section{Smallness}\label{small}

In this section, we prove:

\begin{theorem} The resolution
$$p:Z^{(m)}(V)\rightarrow{}^0{\rm Hilb}^{(m)}(V)$$
is small.
\end{theorem}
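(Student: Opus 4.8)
The plan is to use the standard criterion for smallness: a proper surjective morphism $p : Z \to X$ between irreducible varieties with $Z$ smooth of dimension $n$ is small if and only if, for every $c > 0$, the locus $\{x \in X : \dim p^{-1}(x) \geq c\}$ has codimension strictly greater than $2c$ in $X$. Equivalently, and more conveniently here, I would bound the dimension of the fibre product Steinberg-type variety
$$Z^{(m)}(V)\times_{{}^0{\rm Hilb}^{(m)}(V)}Z^{(m)}(V),$$
showing it has dimension at most $\dim Z^{(m)}(V) = \dim{}^0{\rm Hilb}^{(m)}(V) = (m-1)d(d-1)/2$. Indeed, if the fibre dimension jumps to $\geq c$ on a locus of codimension $\leq 2c$, then the fibre product acquires a component of dimension $\geq \dim X - 2c + 2c = \dim X$ coming from that locus together with the two flag choices, strictly exceeding $\dim Z$ unless the locus is exactly codimension $2c+\varepsilon$; so an upper bound of $\dim Z$ on the fibre product, combined with irreducibility of $Z$ (which forces the diagonal component to have the full dimension $\dim Z$), yields smallness.

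Concretely, I would pull everything back to the level of the $({\rm End}(V)^m\times V)$-picture. A point of the fibre product is (up to ${\rm GL}(V)$) a stable tuple $(\varphi_*,v)\in(\mathfrak{n}(V)^m\times V)_{\rm st}$ together with a second complete flag $G_*$, lying in some ${\rm GL}(V)$-orbit, such that $\varphi_k(G_i)\subset G_{i-1}$ for all $k,i$. Writing $G_* = g F_*^0$ and using the Bruhat decomposition ${\rm GL}(V) = \bigsqcup_w B w B$, the fibre product stratifies by the relative position $w\in S_d$ of the two flags. For a fixed $w$, the relevant space is
$$\mathcal{Z}_w = \{(\varphi_*,v)\in (\mathfrak{n}(V)^m\times V)_{\rm st} : \varphi_k\in\mathfrak{n}(V)\cap w\,\mathfrak{n}(V)\,w^{-1}\ \text{for all }k\},$$
fibred over the Schubert cell $BwB/B$ of dimension $\ell(w)$, and I must show
$$\dim\big({\rm GL}(V)\times^{B(V)}\mathcal{Z}_w\big)/{\rm GL}(V) = \ell(w) + m\cdot\dim(\mathfrak{n}(V)\cap w\,\mathfrak{n}(V)\,w^{-1}) + d - \dim B(V)$$
is at most $(m-1)d(d-1)/2$, with equality only for $w = e$. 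Since $\dim B(V) = d(d+1)/2$ and $\dim\mathfrak{n}(V) = d(d-1)/2$, the $w=e$ term is $0 + m d(d-1)/2 + d - d(d+1)/2 = (m-1)d(d-1)/2$ as required, so the content is the strict inequality for $w\neq e$.

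The main obstacle — and the crux of the argument — is the combinatorial estimate
$$\ell(w) + m\cdot\dim\big(\mathfrak{n}\cap w\mathfrak{n}w^{-1}\big) < m\cdot\dim\mathfrak{n} \quad\text{for all }w\neq e,$$
after accounting for the stability condition, which can only cut down dimension and so is harmless for the upper bound (though one must check it does not interfere with the $w=e$ equality case). Now $\dim\mathfrak{n} - \dim(\mathfrak{n}\cap w\mathfrak{n}w^{-1})$ is exactly the number of positive roots $\alpha$ with $w^{-1}\alpha < 0$, which is $\ell(w)$; hence $m\cdot\dim\mathfrak{n} - m\cdot\dim(\mathfrak{n}\cap w\mathfrak{n}w^{-1}) = m\,\ell(w)$, and the inequality becomes $\ell(w) < m\,\ell(w)$, i.e. $(m-1)\ell(w) > 0$, which holds precisely because $m\geq 2$ and $w\neq e$. (For $m=1$ the variety is a point and there is nothing to prove.) Thus the estimate is in fact clean once organized correctly; I would then assemble the strata, invoke irreducibility of $Z^{(m)}(V)$ from the Proposition in Section~\ref{resolution} to identify the diagonal stratum as a genuine top-dimensional component, and conclude smallness, which by the decomposition theorem immediately gives the intersection cohomology Poincaré polynomial as the motive of $Z^{(m)}(V)$ computed in Theorem~\ref{thmmotive2}.
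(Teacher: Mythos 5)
Your proposal is correct and follows essentially the same route as the paper: both form the Steinberg-type fibre product $Z^{(m)}(V)\times_{{}^0{\rm Hilb}^{(m)}(V)}Z^{(m)}(V)$, stratify it by the relative position $w\in S_d$ of the two flags, and show each off-diagonal stratum has dimension $(m-1)\ell(w)$ below the top. Your root-theoretic count $\dim\mathfrak{n}-\dim(\mathfrak{n}\cap w\mathfrak{n}w^{-1})=\ell(w)$ is just a coordinate-free rephrasing of the paper's explicit count of matrix entries, and your reduction of the key inequality to $(m-1)\ell(w)>0$ matches the paper's conclusion exactly.
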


This means that the locus in the target where the fibre has at least dimension $r$ has codimension larger than $2r$, for all $r>0$. Equivalently, smallness of a resolution $f:X\rightarrow Y$ of a variety $Y$ by a proper map from a smooth irreducible variety $X$ is equivalent to $X\times_YX$ having dimension $\dim X$, with the diagonally embedded $X$ being the unique irreducible component of this dimension.\\[1ex]
So we consider the Steinberg-type variety $${\rm St}^{(m)}(V)=Z^{(m)}(V)\times_{{}^0{\rm Hilb}^{(m)}(V)}Z^{(m)}(V).$$
We will use the following compatibility of fibre products and geometric quotients, which should also hold without the assumption on $G$.

\begin{lemma} For $G$ a special algebraic group, $X$, $X'$ and $X''$ being $G$-varieties admitting geometric quotients, and a diagram of $G$-equivariant maps $X\rightarrow X''\leftarrow X'$, we have an isomorphism
$$X/G\times_{X''/G}X'/G\simeq (X\times_{X''}X')/G.$$
\end{lemma}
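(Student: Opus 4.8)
The plan is to reduce the statement to a local computation using the defining property of special groups. Recall that $G$ is special means every étale-locally trivial principal $G$-bundle is Zariski-locally trivial. Since $X$, $X'$, $X''$ admit geometric quotients $q\colon X\to X/G$ etc., and since for a free action with geometric quotient the quotient map is a principal $G$-bundle (here in the relevant setting these maps are $G$-torsors in the fppf topology, hence Zariski-locally trivial because $G$ is special), I would first fix a common Zariski-open cover of $X''/G$ over which $q''\colon X''\to X''/G$ trivializes, pull this cover back to $X/G$ and $X'/G$, and refine so that $q$ and $q'$ trivialize simultaneously. Over such a common open set $W\subseteq X''/G$ we then have $q^{-1}(W)\simeq W\times G$, and similarly for $X'$ and $X''$, compatibly with the maps to $W\times G$.

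Next I would carry out the computation over a single trivializing open set. On $W\times G$ the map to $(X''$-chart$)=W\times G$ induced by $X\to X''$ is $G$-equivariant; after the trivialization it has the form $(w,g)\mapsto (f(w), g)$ for a morphism $f$ (equivariance forces the $G$-factor to be the identity up to the torsor structure), and similarly $X'\to X''$ becomes $(w',g')\mapsto(f'(w'),g')$. Then the fibre product $X\times_{X''}X'$ restricted to this chart is $\{((w,g),(w',g'))\,:\,f(w)=f'(w'),\ g=g'\}\simeq (W_{X/G}\times_{W}W_{X'/G})\times G$, and quotienting by the diagonal $G$ (which acts only on the common $G$-factor) gives $W_{X/G}\times_W W_{X'/G}$, which is exactly the chart of $X/G\times_{X''/G}X'/G$. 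So the desired isomorphism holds locally, and manifestly compatibly with restrictions.

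Finally I would glue: the locally defined isomorphisms agree on overlaps because they are all induced by the canonical map $(X\times_{X''}X')/G\to X/G\times_{X''/G}X'/G$ coming from functoriality of quotients and fibre products (the two projections $X\times_{X''}X'\to X,X'$ are $G$-equivariant, hence descend, and land compatibly over $X''/G$, yielding the canonical morphism; the local computation shows it is an isomorphism). Alternatively one checks directly that the canonical morphism is bijective on points and an isomorphism on each chart, hence an isomorphism. The main obstacle is the bookkeeping in the first step — ensuring that the geometric quotient maps really are Zariski-locally trivial $G$-bundles in this context and choosing a cover that simultaneously trivializes all three. Once local triviality is in hand, the chart computation and the gluing are formal. (This is why the hypothesis that $G$ be special is imposed; as remarked, the statement should hold more generally by working étale-locally, but then one must argue that étale-local triviality of the two sides suffices to conclude, which requires a descent argument we do not need here since $\mathrm{GL}(V)$ and $B(V)$ are special.)
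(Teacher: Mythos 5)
Your plan is correct and follows essentially the same route as the paper: produce the canonical morphism from functoriality of quotients and fibre products, observe that being an isomorphism is local on the base, use specialness of $G$ to trivialize all three quotient maps Zariski-locally, and verify the isomorphism on trivializing charts. The paper's proof is exactly this (stated more tersely), including the same implicit assumption that the geometric quotients in question are locally trivial $G$-bundles, which you rightly flag as the only point needing care.
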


\begin{proof} The universal property of fibre products provides a map from the left hand side to the right hand side. Whether this is an isomorphism can be verified locally. Since $G$ is special, all geometric quotients are Zariski-locally trivial, and we can assume $X=G\times F$ with $G$-action on the left factor, and similarly for $X'$ and $X''$. Then the claimed isomorphism is obvious.
\end{proof}

Using this lemma, we can equivalently write ${\rm St}^{(m)}(V)$ as the quotient by the natural ${\rm GL}(V)$-action of
$$Y^{(m)}(V)\times_{(\mathcal{N}^{(m)}(V)\times V)_{\rm st}}Y^{(m)}(V),$$
this variety being isomorphic to the variety of tuples
$$Y_2^{(m)}(V)=((\varphi_*,v),F_*,F_*')\in(\mathcal{N}^{(m)}(V)\times V)_{\rm st}\times{\rm Fl}(V)\times{\rm Fl}(V)$$
such that $$\varphi_k(F_i)\subset F_{i-1},\; \varphi_k(F_i')\subset F_{i-1}'$$
for all $k=1,\ldots,m$, $i=1,\ldots,d$. We consider the projection
$$p_2:Y_2^{(m)}(V)\rightarrow{\rm Fl}(V)\times{\rm Fl}(V),$$
which is ${\rm GL}(V)$-equivariant. The orbits $\mathcal{O}_\sigma$ under the diagonal action in the target are parametrized by permutations, with $\mathcal{O}_\sigma$ for $\sigma\in S_d$ the orbit of the pair of flags $(F_\sigma,F'_\sigma)$ defined by $$(F_\sigma)_i=\langle v_1,\ldots,v_i\rangle,\; (F_\sigma')_i=\langle v_{\sigma 1},\ldots,v_{\sigma i}\rangle$$
for all $i$. If the inverse image $p_2^{-1}(\mathcal{O}_\sigma)$ is non-empty, it is thus a homogeneous bundle over $\mathcal{O}_\sigma$. The fibre over $(F_\sigma,F_\sigma')$ consists of tuples $(\varphi_1,\ldots,\varphi_m,v)$ in $(\mathcal{N}^{(m)}(V)\times V)_{\rm st}$ where the $\varphi_k$ map each $v_i$ to a linear combination of the $v_j$ such that $j>i$ and $\sigma j>\sigma i$, thus
$$\dim p^{-1}(F_\sigma,F'_\sigma)=m(d(d-1)/2-l(\sigma))+d.$$
Similarly, the stabilizer of $(F_\sigma,F_\sigma')$ in ${\rm GL}(V)$ consists of invertible maps mapping each $v_i$ to a linear combination of the $v_j$ such that $j\geq i$ and $\sigma j\geq\sigma i$, and thus
$$\dim \mathcal{O}_\sigma=d(d-1)/2+l(\sigma).$$ Thus $p_2^{-1}(\mathcal{O}_\sigma)$ is irreducible of dimension
$$\dim p_2^{-1}(\mathcal{O}_\sigma)=(m-1)(d(d-1)/2-l(\sigma))+d^2.$$
We thus see that $Y_2^{(m)}(V)$ has dimension $(m-1)d(d-1)/2+d^2$, and $p_2^{-1}(\mathcal{O}_{\rm id})$ is the unique irreducible component of this dimension. Consequently,
$${\rm St}^{(m)}(V)=Y_2^{(m)}(V)/{\rm GL}(V)$$
has dimension $m(m-1)/2$, with a unique irreducible component of this dimension. This proves the theorem.

\begin{corollary}\label{coric} The Poincar\'e polynomial in rational intersection homology of ${}^0{\rm Hilb}^{(m)}(V)$ is given by
$$\sum_i\dim{\rm IH}^i({}^0{\rm Hilb}^{(m)}(V),\mathbb{Q})q^{i/2}=\prod_{i=0}^{d-1}\frac{q^{(m-1)i+1}-1}{q-1}.$$
\end{corollary}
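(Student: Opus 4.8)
The plan is to deduce the intersection cohomology of ${}^0{\rm Hilb}^{(m)}(V)$ from the ordinary cohomology of the small resolution $Z^{(m)}(V)$ just constructed, and then to read the latter off from the motive computed in Theorem \ref{thmmotive2}. The first ingredient is the behaviour of the decomposition theorem for small maps. Since $p:Z^{(m)}(V)\to{}^0{\rm Hilb}^{(m)}(V)$ is a proper, birational map from a smooth irreducible variety which, by the preceding theorem, is small, the complex $Rp_*\underline{\mathbb{Q}}_{Z^{(m)}(V)}[\dim Z^{(m)}(V)]$ is a simple perverse sheaf with no non-trivial summands, hence canonically isomorphic to the intersection cohomology complex $IC_{{}^0{\rm Hilb}^{(m)}(V)}$. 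Taking hypercohomology yields an isomorphism ${\rm IH}^i({}^0{\rm Hilb}^{(m)}(V),\mathbb{Q})\cong H^i(Z^{(m)}(V),\mathbb{Q})$ for all $i$.

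It then remains to compute the Betti numbers of $Z^{(m)}(V)$. I would first record that $Z^{(m)}(V)$ is smooth (shown in Section \ref{resolution}) and projective, being projective over the projective variety ${}^0{\rm Hilb}^{(m)}(V)$ via $p$. By Theorem \ref{thmmotive2} its class in the Grothendieck ring equals the polynomial $\prod_{i=0}^{d-1}\frac{\mathbb{L}^{(m-1)i+1}-1}{\mathbb{L}-1}=\prod_{i=0}^{d-1}(1+\mathbb{L}+\ldots+\mathbb{L}^{(m-1)i})$ in $\mathbb{L}$, with non-negative integer coefficients. Applying the $E$-polynomial (the ring homomorphism $K_0({\rm Var}_\mathbb{C})\to\mathbb{Z}[u,v]$ sending $\mathbb{L}$ to $uv$, see \cite{Br}) shows that the Hodge--Deligne polynomial of $Z^{(m)}(V)$ is a polynomial in $uv$ alone; since $Z^{(m)}(V)$ is smooth and projective its cohomology is pure, so this forces the cohomology to be of Hodge--Tate type and concentrated in even degrees, with $\dim H^{2i}(Z^{(m)}(V),\mathbb{Q})$ equal to the coefficient of $q^i$ in $\prod_{i=0}^{d-1}\frac{q^{(m-1)i+1}-1}{q-1}$. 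Alternatively, one may extract an explicit affine paving of $Z^{(m)}(V)$ from the $B(V)$-equivariant Schubert stratification used in Section \ref{motive2}, which gives the same Betti numbers directly.

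Combining the two steps gives $\sum_i\dim{\rm IH}^i({}^0{\rm Hilb}^{(m)}(V),\mathbb{Q})q^{i/2}=\sum_i\dim H^i(Z^{(m)}(V),\mathbb{Q})q^{i/2}=\prod_{i=0}^{d-1}\frac{q^{(m-1)i+1}-1}{q-1}$, which is the assertion. I do not expect a serious obstacle, as the argument is a concatenation of standard facts; the points that require care are that the degree conventions in the decomposition-theorem isomorphism match those in the stated Poincar\'e polynomial, and that $Z^{(m)}(V)$ is genuinely projective so that purity of its cohomology --- and hence the identification of its Betti numbers with the coefficients of its motive --- is legitimate. Both of these are routine.
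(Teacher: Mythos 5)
Your argument is correct and follows essentially the same route as the paper: identify ${\rm IH}^*({}^0{\rm Hilb}^{(m)}(V))$ with $H^*(Z^{(m)}(V))$ via smallness of the resolution, then read off the Betti numbers of $Z^{(m)}(V)$ from its motive being a polynomial in $\mathbb{L}$ (Theorem \ref{thmmotive2}). The paper's proof is a two-line version of this; you merely make explicit the decomposition-theorem input and the purity/$E$-polynomial argument that justify passing from the motive of the smooth projective variety $Z^{(m)}(V)$ to its Poincar\'e polynomial.
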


\begin{proof} The intersection homology of a variety equals the singular cohomology of a small resolution. Since the motive of the resolution $Z^{(m)}(V)$ is a polynomial in $\mathbb{L}$ by Theorem \ref{thmmotive2}, this polynomial evaluated at $q$ also gives the Poincar\'e polynomial in singular cohomology.
\end{proof}

We conclude our study of the punctual noncommutative Hilbert schemes and their resolutions with a  conjecture.

\begin{conjecture} There exist affine pavings of ${}^0{\rm Hilb}^{(m)}(V)$ and its resolution $Z^{(m)}(V)$, with affine parts $S_T^{\rm new}$ indexed by $m$-ary trees with $d$ nodes (respectively with affine parts $S_{\widehat{T}}$ indexed by pairs $\widehat{T}=(T,f)$ consisting of an $m$-ary tree $T$ with $d$-nodes and a compatible ordering $f:T\rightarrow \{1,\ldots,d\}$ in the sense that $f(\omega)\leq f(\omega')$ if $\omega\preceq\omega'$), such that the resolution map $\pi$ maps affine pieces to affine pieces, and the fibre over a point in $S_T^{\rm new}$ admits an affine paving by the $S_{(T,f)}$ for $f$ compatible with $T$.
\end{conjecture}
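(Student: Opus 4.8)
The plan is to mimic the strategy of Section~\ref{paving} (the proof of Theorem~\ref{thmpaving}), upgrading the bookkeeping to keep track of flags. First I would define, for an $m$-ary tree $T$ with $d$ nodes, the candidate affine piece $S_T^{\rm new}\subset {}^0{\rm Hilb}^{(m)}(V)$ to be exactly the set ${}^0S_T$ constructed in the proof of Theorem~\ref{thmpaving}; the content of that theorem is precisely that the $S_T^{\rm new}$ are affine spaces covering ${}^0{\rm Hilb}^{(m)}(V)$, of dimension $|D(T)|$. So the combinatorial indexing by $m$-ary trees with $d$ nodes is already in hand, and the first assertion of the conjecture is Theorem~\ref{thmpaving}. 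What remains is the statement about the resolution $\pi: Z^{(m)}(V)\to{}^0{\rm Hilb}^{(m)}(V)$: that there is a compatible paving $\{S_{(T,f)}\}$ of $Z^{(m)}(V)$ with $\pi(S_{(T,f)})\subseteq S_T^{\rm new}$ and $\pi^{-1}(x)\cap S_{(T,f)}$ an affine space for $x\in S_T^{\rm new}$.

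The key step is to analyze the fibre $\pi^{-1}(x)$ over a point $x=(\varphi_*,v)\in{}^0S_T$ directly. Writing $x$ in the triangular normal form from the proof of Theorem~\ref{thmpaving} (with the ordered basis $B=(e_{\omega_1},\dots,e_{\omega_d})$ in the $\triangleleft$-order, so all $\varphi_k$ are strictly lower triangular with respect to $B$), the fibre is the set of complete flags $F_*$ with $\varphi_k(F_i)\subset F_{i-1}$ for all $k$. This is a closed subvariety of ${\rm Fl}(V)$, a ``multi-Springer fibre'' for the commuting-up-to-nothing family $(\varphi_1,\dots,\varphi_m)$. The natural thing is to stratify it by relative position with respect to the standard flag $F_*^B$ attached to $B$: for $w\in S_d$, let the stratum consist of flags in position $w$ relative to $F_*^B$. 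I would then show each nonempty stratum is an affine space, by the standard row-reduction argument (realizing a flag in a fixed relative position as a $B$-orbit through a partial-permutation-type representative and solving the constraints $\varphi_k(F_i)\subset F_{i-1}$ linearly in the remaining coordinates). The indexing by \emph{compatible orderings} $f:T\to\{1,\dots,d\}$ should emerge because the constraint $\varphi_k(F_i)\subset F_{i-1}$, combined with the tree structure of the normal form, forces the permutation $w$ to be compatible with the partial order $\preceq$ on $T$ in exactly the sense stated; conversely every compatible $f$ gives a nonempty stratum. One then assembles these fibrewise pavings over the affine base $S_T^{\rm new}$ into a global paving of $\pi^{-1}(S_T^{\rm new})$ by affine spaces $S_{(T,f)}$ — this uses that $\pi^{-1}(S_T^{\rm new})\to S_T^{\rm new}$ is, after the normal form, a trivial-enough bundle that the fibrewise cell structure globalizes. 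A useful consistency check is the motive count: the resulting formula should be $\sum_T \mathbb{L}^{|D(T)|}\cdot\big(\sum_{f\text{ compatible with }T}\mathbb{L}^{?}\big)$, and this must reproduce $[Z^{(m)}(\mathbb{C}^d)]=\prod_{i=0}^{d-1}\frac{\mathbb{L}^{(m-1)i+1}-1}{\mathbb{L}-1}$ of Theorem~\ref{thmmotive2}; matching the exponents attached to $(T,f)$ against this product (e.g.\ via a $q$-binomial-theorem manipulation as in Section~\ref{motive2}) both pins down the correct statistic and certifies that no cells are missing.

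The main obstacle, I expect, is proving that each relative-position stratum of the fibre is genuinely an affine space (not merely that it has polynomial motive), and that these fibrewise cells glue into a global affine paving of $\pi^{-1}(S_T^{\rm new})$ compatibly with the base cells. Unlike in the proof of Theorem~\ref{thmpaving}, where the ``$[Q(V)]=0$ forces $Q(V)=\emptyset$'' trick converts a motivic identity into a set-theoretic statement for free, here one genuinely needs explicit coordinates on $Z^{(m)}(V)$: one must write down a section of the quotient map over $S_T^{\rm new}$ (some refinement of the slice $\widetilde{S}_T$, together with a flag), put a tuple into triangular normal form \emph{algebraically in families}, and then exhibit the flag constraints as an explicit affine-linear system whose solution set over each stratum is a coordinate subspace. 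Handling the interaction between the lexicographic/$\triangleleft$ orders used for $T$ and the relative-position order used for flags — i.e.\ verifying that ``compatible ordering'' in the conjecture's sense is exactly the combinatorial condition that makes each stratum nonempty — is the delicate bookkeeping step, and is where I would expect most of the work to lie.
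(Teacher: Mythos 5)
This statement is a \emph{conjecture} that the paper leaves open; there is no proof of it in the paper to compare against, and your text is in any case a plan rather than a proof. More importantly, your foundational choice is explicitly ruled out by the remark immediately following the conjecture in the paper: the author notes that ``an affine paving different from the one constructed in Section \ref{paving} is necessary for proving this conjecture.'' You set $S_T^{\rm new}={}^0S_T$, i.e.\ exactly the pieces from Theorem \ref{thmpaving}, and this cannot work. The paper's own counterexample makes this concrete: for $m=2$, $d=3$ and $T=\{\emptyset,(1),(2)\}$ there are exactly two compatible orderings $f$, so the conjecture demands that the fibre of $\pi$ over \emph{every} point of $S_T^{\rm new}$ be paved by two affine cells, i.e.\ be a $\mathbb{P}^1$. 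But ${}^0S_T\cong\mathbb{A}^{|D(T)|}=\mathbb{A}^2$ is two-dimensional inside the threefold ${}^0{\rm Hilb}^{(2)}(\mathbb{C}^3)$ (the cone over a quartic scroll, with one isolated singular point), and since the resolution is small its fibre over all but finitely many points is a single point. So over a generic point of ${}^0S_T$ the fibre is a point, not a $\mathbb{P}^1$, and the required paving by the two $S_{(T,f)}$ does not exist. The correct $S_T^{\rm new}$ for this $T$ must be just the cone point, so the sought paving of the base must redistribute the strata, not merely refine the fibres of the old one.

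Beyond this, the remaining steps of your plan are exactly the open difficulties: you would need to (i) produce a \emph{new} tree-indexed paving of ${}^0{\rm Hilb}^{(m)}(V)$ whose piece for $T$ has dimension matching $|D(T)|$ minus the ``excess'' absorbed into the fibres, (ii) prove that the relative-position strata of the multi-Springer fibres are genuinely affine spaces (you correctly flag that the motivic trick of Section \ref{paving} gives no set-theoretic conclusion here), and (iii) globalize fibrewise cells over each base cell. None of these is carried out, and the motive consistency check you propose, while a sensible sanity test, cannot substitute for them. As it stands the proposal does not establish the conjecture and begins from a decomposition that provably fails it.
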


We note that an affine paving different from the one constructed in Section \ref{paving} is necessary for proving this conjecture. For example, the affine piece $S_T^{\rm new}\subset{}^0{\rm Hilb}^{(2)}(\mathbb{C}^3)$ for $T=\{\emptyset,(1),(2)\}$ should consist only of the isolated singularity, which is resolved to a projective line consisting of two affine pieces for the two compatible orderings.


\begin{thebibliography}{10}
\bibitem{A} G.~E. Andrews, {\it The theory of partitions}, Encyclopedia of Mathematics and its Applications, Vol. 2, Addison-Wesley Publishing Co., Reading, Mass.-London-Amsterdam, 1976.
\bibitem{ACMR} A.~Astruc, F. Chapoton, K. Martinez and M. Reineke, Motives of central slope Kronecker moduli, \'Epijournal G\'eom. Alg\'ebrique {\bf 9} (2025), Art. 19, 15 pp..
\bibitem{Br} T. Bridgeland, An introduction to motivic Hall algebras, Adv. Math. {\bf 229} (2012), no.~1, 102--138.
\bibitem{ES} G. Ellingsrud and S.~A. Str\o mme, On the Chow ring of a geometric quotient, Ann. of Math. (2) {\bf 130} (1989), no.~1, 159--187.
\bibitem{ER} J. Engel and M. Reineke, Smooth models of quiver moduli, Math. Z. {\bf 262} (2009), no.~4, 817--848.
\bibitem{franzen} H. Franzen, On cohomology rings of non-commutative Hilbert schemes and CoHa-modules, Math. Res. Lett. {\bf 23} (2016), no.~3, 805--840; MR3533197
\bibitem{GR} L. Gösmann, M. Reineke, Motives of nullcones of quiver representations, Preprint 2025, arXiv:2502.02353
\bibitem{Ha} B.~E. Hassett, Cubic fourfolds, K3 surfaces, and rationality questions, in {\it Rationality problems in algebraic geometry}, 29--66, Lecture Notes in Math. Fond. CIME/CIME Found. Subser., 2172 , Springer, Cham.
\bibitem{iarrobino} A.~A. Iarrobino, Punctual Hilbert schemes, Mem. Amer. Math. Soc. {\bf 10} (1977), no.~188, {\rm viii}+112 pp..
\bibitem{joycesong} D.~D. Joyce and Y. Song, A theory of generalized Donaldson-Thomas invariants, Mem. Amer. Math. Soc. {\bf 217} (2012), no.~1020, iv+199 pp..
\bibitem{lebruyn} L. Le~Bruyn and G.~F. Seelinger, Fibers of generic Brauer-Severi schemes, J. Algebra {\bf 214} (1999), no.~1, 222--234.
\bibitem{Nori} M.~V. Nori, Appendix to the paper by C. S. Seshadri: Desingularisation of the moduli
varieties of vector bundles over curves. In: Proceedings of the International Symposium
on Algebraic Geometry, Kyoto (1977).
\bibitem{RHNS} M. Reineke, The Harder-Narasimhan system in quantum groups and cohomology of quiver moduli, Invent. math. 152, 349--368 (2003).
\bibitem{R} M. Reineke, Cohomology of noncommutative Hilbert schemes, Algebr. Represent. Theory {\bf 8} (2005), no.~4, 541--561.
\bibitem{functional} M. Reineke, Cohomology of quiver moduli, functional equations, and integrality of Donaldson-Thomas type invariants, Compos. Math. {\bf 147} (2011), no.~3, 943--964.
\bibitem{reinekedocumenta} M. Reineke, Degenerate cohomological Hall algebra and quantized Donaldson-Thomas invariants for $m$-loop quivers, Doc. Math. {\bf 17} (2012), 1--22.
\bibitem{vandenbergh} M. Van~den~Bergh, The Brauer-Severi scheme of the trace ring of generic matrices, in {\it Perspectives in ring theory (Antwerp, 1987)}, 333--338, NATO Adv. Sci. Inst. Ser. C: Math. Phys. Sci., 233, Kluwer Acad. Publ., Dordrecht.
\end{thebibliography}
\end{document}